\newcommand{\ue}{u_{\varepsilon}}
\newcommand{\dd}{\,\mathrm{d}}
\newtheorem{theorem}{Theorem}[section]
\newtheorem{lemma}[theorem]{Lemma}
\theoremstyle{definition}
\newtheorem{corollary}[theorem]{Corollary}
\title{\Large Modelling Capillary Rise with a Slip Boundary Condition:\\Well-posedness and Long-time Dynamics of Solutions to Washburn's Equation}
\author{Isidora Rapajić\thanks{Department of Mathematics and Informatics,
Faculty of Sciences, Trg Dositeja Obradovića 4, 21000 Novi Sad, Serbia, and Mathematical Institute of the Serbian Academy of Sciences and Arts, Belgrade, Serbia. Email: \texttt{isidora.rapajic@turing.mi.sanu.ac.rs} },~ Srboljub Simić\thanks{Department of Mathematics and Informatics,
Faculty of Sciences, Trg Dositeja Obradovića 4, 21000 Novi Sad, Serbia. Email: \texttt{ssimic@uns.ac.rs}},~ and Endre S{\"u}li\thanks{Mathematical Institute, University of Oxford, Andrew Wiles Building, Woodstock Road, Oxford OX2 6GG, United Kingdom. Email: \texttt{endre.suli@maths.ox.ac.uk}}}
\date{}
\begin{document}

\maketitle

\begin{abstract} 
The aim of this paper is to extend Washburn's capillary rise equation by incorporating a slip condition at the pipe wall. The governing equation is derived using fundamental principles from continuum mechanics. A new scaling is introduced, allowing for a systematic analysis of different flow regimes. We prove the global-in-time existence and uniqueness of a bounded positive solution to Washburn's equation that includes the slip parameter, as well as the continuous dependence of the solution in the maximum norm on the initial data. Thus, the initial-value problem for Washburn's equation is shown to be well-posed in the sense of Hadamard. Additionally, we show that the unique equilibrium solution may be reached either monotonically or in an oscillatory fashion, similarly to the no-slip case. Finally, we determine the basin of attraction for the system, ensuring that the equilibrium state will be reached from the initial data we impose. These results hold for any positive value of the nondimensional slip parameter in the model, and for all values of the ratio $h_0/h_e$ in the range $[0,3/2]$, where $h_0$ is the initial height of the fluid column and $h_e$ is its equilibrium height.
\end{abstract}


\section{Introduction} 


The equation governing the spontaneous movement of a liquid in a vertical narrow pipe, derived and analyzed by E.~W.~Washburn in 1921 \cite{washburn1921dynamics}, is known as Washburn's equation. The terms appearing in the equation are inertial, viscous, gravitational, and capillary.
Although Washburn's equation can be derived using Newton’s second law, it is also possible to derive it from first principles, that is, from the laws of mass and momentum conservation. 

The case of liquid-rise between two parallel plates is discussed in \cite{grunding2020enhanced}, and a similar procedure for a cylindrical geometry is considered in \cite{kornev2001spontaneous}.
It has been observed that the stationary state can be reached monotonically or in an oscillatory manner \cite{quere1997inertial, quere1999rebounds, kornev2001spontaneous}. However, discrepancies between experimental observations and predictions based on the model were observed during the initial phase of the flow. One possible explanation could be the Huh--Scriven paradox. Specifically, if the velocity field is assumed to be parabolic with a no-slip boundary condition, this would imply that the velocity at the pipe wall is zero, even as the liquid column rises \cite{huh1971hydrodynamic}. This led Fricke et al.~and Gr\"{u}nding \cite{fricke2023analytical, grunding2020enhanced} to the introduction of a slip condition in the velocity field to account for the viscous dissipation caused by the moving contact line.

The question of global existence and uniqueness of solutions to Washburn's equation over the time interval $t \in [0,\infty)$ was considered in \cite{plociniczak2018monotonicity}, which provided both the motivation and inspiration for this work. However, closer examination revealed gaps in the proof that require further attention. A crucial hypothesis of the Picard--Lindel\"{o}f theorem (namely, the Lipschitz condition) does not hold at $t=0$ because of the choice of  zero initial data for the solution and the fact that, while the nonlinear term $f(u):=1 - \sqrt{2u}$ featuring in the equation satisfies a H\"{o}lder condition with exponent $1/2$ on the space of nonnegative continuous functions defined on the interval $[0,\infty)$ that vanish at $t=0$, it violates the Lipschitz condition. The authors of \cite{plociniczak2018monotonicity} therefore applied
Schauder's fixed point theorem to prove the existence of a solution defined on $[0,\infty)$, in conjunction with Theorem 11.7 from \cite{precup2002methods}, to guarantee the local uniqueness of the solution over the interval $[0,1/2]$, followed by the use of Banach's fixed point theorem over the interval $[1/2,\infty)$ to extend the locally unique solution to a globally unique solution over the whole of $[0,\infty)$. While the paper \cite{plociniczak2018monotonicity} contains a number of interesting ideas, the proof of existence and uniqueness of solutions to Washburn's equation presented there is incomplete, as in their application of both Schauder's and Banach's fixed point theorem the authors failed to verify the essential requirement that their proposed fixed point map $T: v \mapsto T(v)$ is a self-map, i.e., that $T$ maps a certain complete metric space $\mathcal{X}$  (which should have been taken to be the space of all bounded nonnegative continuous functions $v$ defined on the interval $[0,\infty)$ in the case of Schauder fixed point theorem, and the space of all continuous functions $v$ defined on a bounded closed interval which are equi-bounded from below by a positive constant in the case of Banach fixed point theorem) into itself. We were unable to fill the gaps in the authors' proof of Theorem 1 in \cite{plociniczak2018monotonicity}. Furthermore, the application of Theorem 11.7 from
\cite{precup2002methods} in the authors' proof of Theorem 1 in \cite{plociniczak2018monotonicity} is flawed because, with $\varphi(s) \equiv 0$ and $\psi(s)=s^2/2$ chosen by the authors as the `endpoints’ of the interval of monotonicity in the proof of their Theorem 1, Theorem 11.7 from \cite{precup2002methods} cannot be applied since  it requires strict positivity of the two endpoints of the interval of monotonicity over the range of $s$ under consideration, which in the paper \cite{plociniczak2018monotonicity} is the interval $s \in [0,1/2]$.
For these reasons, we shall revisit the proof of global existence and uniqueness of solutions to Washburn's equation through a completely different approach, and we shall do so by admitting a broader range of initial data than in \cite{plociniczak2018monotonicity}.

Thus, our aim is twofold. In terms of physics, our objective is to place Washburn's equation into the broadest possible context and derive it using a precise list of assumptions. This assumes the application of the basic laws of continuum mechanics, which will provide a firm basis for generalizations. In terms of mathematics, our aim is to apply rigorous tools in the analysis of model reduction, to prove the existence and uniqueness
of solutions to Washburn's equation over the entire time interval $[0,\infty)$, and to explore the stability of solutions to Washburn's equation. This, in turn, will bring about simplifications and extensions of known results. 

The paper is organized as follows. In Section \ref{sec::Derivation}, we derive Washburn’s equation for a pipe with constant radius, introducing a slip condition analogous to the approach in \cite{fricke2023analytical}. We shall first use the local forms of mass and momentum balance in cylindrical coordinates, along with basic assumptions on the fluid and its flow, to derive restrictions on the velocity and pressure fields. Additional key ingredients, which will ultimately lead to Washburn's equation, are our assumption of the velocity profile, the introduction of a mean velocity, and the use of momentum balance in its global form. 

In Section \ref{sec::DimensionalAnalysis} we focus on the dimensional analysis of the model and its transformation to a dimensionless form. This will be the starting point for the subsequent mathematical analysis. Furthermore, using a scaling argument we develop a rigorous approach to model reduction in the context of Washburn's equation. It will reveal the conditions which allow us to neglect particular physical mechanisms. 

In Section \ref{sec::GlobalExistence} we prove the global-in-time existence and uniqueness of solutions to Washburn's equation. Our main tool is the Picard--Lindel\"{o}f theorem applied to a regularized form of Washburn's equations, in conjunction with a compactness argument based on the Arzel\`{a}--Ascoli theorem. To the best of our knowledge, this is the first instance of the application of these techniques to Washburn's equation. In this study we admit more general (but physically meaningful) initial data than the homogeneous initial data used in \cite{plociniczak2018monotonicity}. We also prove the continuous dependence of the solution on the initial data in the maximum norm. 

Finally, in Section \ref{sec::Stability} we discuss the stability of the unique stationary (equilibrium) solution, corresponding to the equilibrium height, and the convergence of the solution of the initial-value problem to the equilibrium as $t \to + \infty$. Linear stability analysis is used to confirm  the asymptotic stability of the stationary solution; it reveals the influence of the slip parameter on the critical value of the model parameter $\omega$ that separates monotonic solutions from oscillatory solutions. This local characterization based on linearization raises the question of nonlinear stability of the equilibrium. An appropriate choice of a Lyapunov function and the application of LaSalle's invariance principle enable us to identify the basin of attraction, which includes the initial state of the system proposed in this study. This global stability result, in conjunction with global existence and uniqueness, implies that solutions to the initial-value problem studied in this paper converge to the equilibrium state. 

We conclude with some remarks and an outlook on potential future research directions. 



\section{Derivation of the model} 
\label{sec::Derivation}


The usual approach to the modeling of capillary flow of a fluid is to apply Newton's second law to a control volume (or an infinitesimal volume element). This simplified procedure yields a second-order ordinary differential equation, Washburn's equation, which encodes all physical mechanisms relevant for the flow: inertia, gravity, viscosity and capillary effects. In our view, the main drawback of this approach is that it is difficult to generalize to more complex situations that are physically plausible. Therefore, our goal in this section is to derive Washburn's equation starting from first principles, i.e., from the fundamental laws of mass and momentum balance for continuous media. 

\subsection{Modelling assumptions} 

We study the motion of a fluid through a vertical cylindrical pipe of circular cross section. It is assumed that the cross-sectional radius $R$ of the pipe is constant. It is also assumed that the fluid is Newtonian, of constant mass density $\rho$ and of constant dynamic viscosity $\mu$. During the motion, the free surface of the fluid is assumed to be subject to constant surface tension $\gamma$ and a constant contact angle $\theta$. The volume force that acts on the bulk fluid is gravity of constant acceleration $g$. The main assumption concerning the motion of the fluid is that the velocity field $\mathbf{v}$ is axially symmetric and parallel to the axis of the pipe. 

Our goal is to determine the height $h(t)$ of the meniscus at time $t \geq 0$. The influence that the approximately spherical cap of the free surface has on the motion of the bulk fluid will be neglected. It will also be assumed that the entrance into the pipe is
not immersed in the large (theoretically infinite) reservoir, in which the fluid is still. These assumptions make the model approximate from the outset, but it will turn out that they are needed for the derivation of Washburn's equation. 

\subsection{Local form of the balance laws} 

The first step in the derivation of the model will be to use the mass and momentum balance laws in their local form \cite{acheson1990elementary}. Since the fluid is assumed to be incompressible and viscous, these are the Navier--Stokes equations for a viscous incompressible Newtonian fluid: 
\begin{equation}
	\nabla \cdot \mathbf{v} = 0, 
	\label{eq::MassBL}
\end{equation}
\begin{equation}
	\rho \left( \frac{\partial \mathbf{v}}{\partial t} + \mathbf{v} \cdot \nabla \mathbf{v} \right) = - \nabla p + \mu\, \Delta \mathbf{v} + \rho\, \mathbf{g}. 
	\label{eq::MomentumBL}
\end{equation}
In component-wise form, the mass and momentum balance laws in cylindrical coordinates are given in the Appendix, equations \eqref{eq::MassBalanceCylindrical} and \eqref{eq::MomentumBalanceCylindrical}, respectively. They will be used to derive further restrictions on the velocity and pressure fields. 
\begin{enumerate}
	\item \textbf{The velocity field} in cylindrical coordinates with respect to the usual orthonormal basis is expressed as
	\begin{equation*}
		\mathbf{v} = v_r \mathbf{e}_r + v_{\varphi} \mathbf{e}_{\varphi} + v_z \mathbf{e}_z.
	\end{equation*}
	Since the fluid flows along the (vertical) $z$-axis, we assume that
	\begin{equation}
		v_r \equiv 0, \quad v_{\varphi} \equiv 0. 
		\label{eq::VelocityAssumptions}
	\end{equation}
	It follows from the mass balance law \eqref{eq::MassBalanceCylindrical} and the assumptions on the velocity \eqref{eq::VelocityAssumptions} that
	\begin{equation}
		\frac{\partial v_z}{\partial z} = 0.
		\label{eq::VelocityField}
	\end{equation}
	In other words, $v_z$ does not depend on $z$. Because of the assumed axial symmetry, we may assume that $v_z$ does not depend on $\varphi$ either, so $v_z = v_z(r,t)$.
	It then follows that the velocity field has the form 
	\begin{equation}
		\mathbf{v} = v_z(r,t)\mathbf{e}_z. 
		\label{eq::VelocityField-Local}
	\end{equation}
	\item \textbf{The pressure field} in our model is assumed to be stationary, i.e., it is supposed to be independent of time $t$.  From the momentum balance law \eqref{eq::MomentumBalanceCylindrical} and the assumptions \eqref{eq::VelocityAssumptions}  on the velocity field, we obtain the following constraints on the pressure field:
	\begin{equation*}
		\frac{\partial p}{\partial r} = 0, \quad \frac{\partial p}{\partial \varphi} = 0.
	\end{equation*}
	In other words, the pressure field depends only on $z$, but not on $r$ and $\varphi$; that is,
	\begin{equation}
		p = p(z).
		\label{eq::PressureField}
	\end{equation} 
\end{enumerate}

The results of the local analysis, expressed in the equalities \eqref{eq::VelocityField-Local} and \eqref{eq::PressureField}, considerably simplify the Navier--Stokes equations \eqref{eq::MassBL}, \eqref{eq::MomentumBL}, but we are still faced with a partial differential equation that has to be solved. To arrive at Washburn's equation, which is an ordinary differential equation, we need to impose further assumptions. Also, we need to analyze the motion using global balance laws, rather than local ones. 

\subsection{Flow field and mean velocity} 

Since the use of a parabolic velocity profile is a standard assumption for the motion of Newtonian fluids through cylindrical domains \cite{zhmud2000dynamics}, we shall assume that the velocity field of the fluid is described by a Poiseuille flow profile (see Figure \ref{fig::Pipe}). In addition, we introduce a slip parameter, as in \cite{fricke2023analytical}, in order to model the moving contact line:
\begin{equation}
	v_{z}(r,t) = v(t) \left( 1 - \frac{r^2}{R^2} + \frac{2L}{R}  \right),
	\label{eq::Poiseuille}
\end{equation}
where a slip-length $L \geq 0$ is introduced; in the no-slip case $L=0$. 

\begin{figure}[ht]
	\centering
	\includegraphics[width=0.25\textwidth]{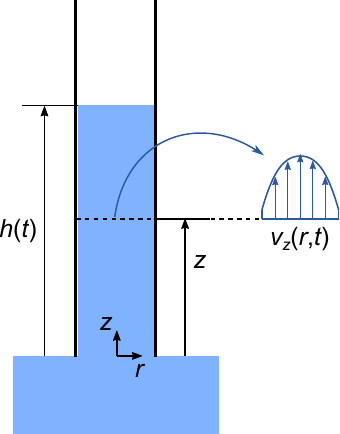}
	\caption{Capillary pipe with Poiseuille velocity profile and column height $h(t)$ at time $t$.}
	\label{fig::Pipe}
\end{figure}

Let us note that postulating a no-slip velocity profile, although commonly done for Newtonian fluids, in this case becomes paradoxical: the fluid velocity at the pipe wall, $r=R$, is then zero at all times, so the contact line would remain stationary; at the same time the liquid column is assumed to be rising, which implies vertical motion of the contact line. This is known as the Huh--Scriven paradox \cite{huh1971hydrodynamic}. 

Since the velocity field is uniform along the vertical axis $z$ of the pipe, we may introduce the mean velocity $\bar{v}$ of the circular cross section, perpendicular to the $z$-axis. It is defined as the mean value of $v_{z}(r,t)$ over the cross section (see the Appendix): 
\begin{equation}
	\bar{v}(t) := \frac{1}{R^{2} \pi} \int_{0}^{2\pi} \int_{0}^{R} v_{z}(r,t) r \dd r \dd \varphi = \frac{v(t)}{2 \beta},
	\label{eq::MeanVelocity}
\end{equation}
where the slip parameter $\beta$ is defined as
\begin{equation}
	\beta := \left(1 + 4\frac{L}{R}\right)^{-1};
	\label{eq::slipParameter}
\end{equation} 
in the no-slip case $\beta = 1$. In our analysis, the mean velocity \eqref{eq::MeanVelocity} will be taken to be related to the rate of change of the height $h(t)$ of the fluid column at time $t$ by
\begin{equation}
	\bar{v}(t) = \frac{\dd h(t)}{\dd t}. 
	\label{eq::v-hRelation}
\end{equation}

\subsection{Global form of the momentum balance law}

To derive Washburn's equation with a slip boundary condition, we apply the momentum balance law in its global/integral form \cite{Gurtin_Fried_Anand_2010}, taking into account all  of the relevant assumptions stated above. The rate of change of the linear momentum is balanced by the sum of the contact forces and body forces, i.e., 
\begin{equation*}
	\frac{\dd }{\dd t} \int_{\mathcal{P}_{t}} \mathbf{K}(\mathbf{x},t) \dd V = \int_{\partial \mathcal{P}_{t}} \mathbf{t}(\mathbf{x},t,\mathbf{n}) \dd S + \int_{\mathcal{P}_{t}} \rho \mathbf{b}(\mathbf{x},t) \dd V, 
\end{equation*}
where $\mathbf{K} = \rho \mathbf{v}$ is the momentum density, $\mathbf{t}$ is the traction at the boundary point with outward unit normal vector $\mathbf{n}$, and $\mathbf{b}$ is the specific volume force. The domain of integration $\mathcal{P}_{t}$ is the volume of the pipe filled with fluid at time $t$. Using the previously introduced assumptions, the following equalities are obtained: 
\begin{equation}
	\begin{split}
		\frac{\dd }{\dd t} \int_{\mathcal{P}_{t}} \mathbf{K}(\mathbf{x},t) \dd V & = \rho R^{2} \pi \frac{\dd }{\dd t} \left[ h(t) \bar{v}(t) \right] \mathbf{e}_{z}, 
		\\ 
		\int_{\partial \mathcal{P}_{t}} \mathbf{t}(\mathbf{x},t,\mathbf{n}) \dd S & = - R^{2} \pi [p(h(t)) - p(0)] \mathbf{e}_{z} - 8 \mu \pi \beta h(t) \bar{v}(t) \mathbf{e}_{z}, 
		\\ 
		\int_{\mathcal{P}_{t}} \rho \mathbf{b}(\mathbf{x},t) \dd V & = - \rho g R^{2} \pi h(t) \mathbf{e}_{z}. 
	\end{split}
	\label{eq::WashburnPrep}
\end{equation}

Taking into account the relation \eqref{eq::v-hRelation} and the fact that the pressure drop is balanced by the vertical component of the surface tension we have that 
\begin{equation*}
	p(0) - p(h(t)) = \frac{2 \gamma \cos \theta}{R}.
\end{equation*}
Combining these equalities, the following nonlinear second-order ordinary differential equation is obtained, which governs the evolution of the height $h(t)$ of the fluid column/meniscus:
\begin{equation}
	\frac{\dd }{\dd t} \left[ \rho h(t) \frac{\dd h(t)}{\dd t} \right] + \frac{8 \mu \beta}{R^{2}} h(t) \frac{\dd h(t)}{\dd t}
	+ \rho g h(t) = \frac{2 \gamma \operatorname{cos}\theta}{R}. 
	\label{eq::dimensionalWashburn}
\end{equation}
We shall refer to this equation as Washburn's equation. The additive terms on the left-hand side are the inertial, viscous and gravity term, respectively, whereas on the right-hand side there is a single term, the capillary term. 

Two remarks are in order concerning the model that will be of use in the subsequent analysis. First, the slip at the boundary affects the model only through the slip parameter $\beta$, which appears in the viscous term. Second, Washburn's equation \eqref{eq::dimensionalWashburn} has a unique stationary solution $h(t) = h_{e} = \mathrm{const.}$, where
\begin{equation}
	h_{e} := \frac{2 \gamma \operatorname{cos}\theta}{\rho g R}, 
	\label{eq::Jurin_height}
\end{equation}
which is known as Jurin's (equilibrium) height; clearly, the value of the slip parameter $\beta$ does not affect the equilibrium height \eqref{eq::Jurin_height}. 

Our final observation is concerned with the choice of the initial data. As is evident from \eqref{eq::dimensionalWashburn}, Washburn's equation
becomes singular at $t=0$ if homogeneous initial conditions $h(0) = 0$, $\dd h(0)/\dd t = 0$ are imposed. If this singularity is to be avoided
in \eqref{eq::dimensionalWashburn}, such homogeneous initial conditions cannot be applied. There have been different attempts to introduce appropriate initial conditions that are physically admissible and mathematically correct at the same time. In this study it will be assumed that the fluid column is initially at rest, with the initial height $h_{0}$ assumed to be nonnegative, but much smaller than the equilibrium height $h_{e}$: 
\begin{equation}
	h(0) = h_{0} \ll h_{e}, \quad 
	\frac{\dd h(0)}{\dd t} = 0.
	\label{eq::InitCond_Dimensional}
\end{equation}

In view of our assumptions, Washburn's equation \eqref{eq::dimensionalWashburn} with initial data \eqref{eq::InitCond_Dimensional} may be seen to be a  suitable model of capillary rise on the entire flow domain, except perhaps in a small neighborhood of the entrance $z = 0$ and in a small neighborhood of the meniscus $z = h(t)$. 



\section{Dimensional analysis and model reduction} 
\label{sec::DimensionalAnalysis}


The systematic mathematical analysis of Washburn's equation, to be conducted in the sequel, requires a model in dimensionless form. In this study we shall adopt the same dimensionless variables as the ones used in \cite{plociniczak2018monotonicity}: 
\begin{equation}\label{eq::dimensionless}
	H := \frac{h}{h_{e}}, \quad T := \frac{t}{\tau}, \quad 
	\tau := \frac{8 \mu h_{e}}{\rho g R^2}, \quad
	\alpha := \frac{h_{0}}{h_{e}}.
\end{equation}
These lead to the following dimensionless form of Washburn's equation:
\begin{equation}
	\omega(HH')' + \beta HH' + H = 1.
	\label{switala_scaled_with_imemersion1}
\end{equation}
with the dimensionless parameter $\omega$ defined by 
\begin{equation}
	\omega := \frac{h_e}{g \tau^{2}} = \frac{\rho ^{2} R^{4} g}{64 \mu^{2} h_e}, 
	\label{omega_asymp}
\end{equation}
where $H = H(T)$ and prime denotes differentiation with respect to the dimensionless time $T$. The initial conditions are scaled as follows:
\begin{equation}
	H(0) = \alpha, \quad H'(0) = 0. 
	\label{eq::IV_DLess}
\end{equation}
The stationary (equilibrium) solution $H(T) = H_{e} = \mathrm{const.}$ of \eqref{switala_scaled_with_imemersion1}, which corresponds to \eqref{eq::Jurin_height}, has the form 
\begin{equation}
	H_{e} = 1. 
	\label{eq::Jurin_DLess}
\end{equation}

Although the physical origins of the terms appearing in the equation \eqref{switala_scaled_with_imemersion1} are now hidden, they can be recognized as follows: $\omega(HH')'$ is the inertial term,  $\beta HH'$ is the viscous term, $H$ is the gravity term, and $1$ is the capillary term. Further analysis will also be facilitated if the dimensionless parameter $\omega$ is expressed in terms of dimensionless numbers, the Ohnesorge number $\mathrm{Oh}$ and the Bond number $\mathrm{Bo}$: 
\begin{align}\label{eq::omega_Bo-Oh}
	\begin{aligned}
		\omega = \frac{1}{128 \operatorname{cos}\theta} \left( \frac{\mathrm{Bo}}{\mathrm{Oh}} \right)^{2},    
		\\ 
		\mathrm{Oh} := \frac{\mu}{\sqrt{R\rho\gamma}}, \quad \mathrm{Bo} := \frac{\rho g R^2}{\gamma}. 
	\end{aligned}
\end{align}
The Ohnesorge number measures the relative importance of viscosity compared to inertia and surface tension, whereas the Bond number (also known as the E\"{o}tv\"{o}s number) relates gravitational forces to surface tension.  

Model reduction is a technique in which certain terms in the model are canceled out because of negligible influence of the corresponding physical mechanism. Model reduction is usually based on heuristic arguments and is performed in an \emph{ad hoc} manner. Our aim is to analyze possible model reductions in a systematic way. To that end we need a properly scaled governing equation that can be simplified if a certain term is much smaller than the others, which are in turn of the same order of magnitude. The dimensionless Washburn's equation \eqref{switala_scaled_with_imemersion1} is a good starting point, but it cannot capture all the possibilities that are likely to appear. The reason for this is simple: it represents only one possible scaled governing equation. For a systematic analysis of model reduction we have to construct a whole family of scaled equations.

Let us introduce the new scalings
\begin{equation*}
	t^{\ast} := \frac{T}{\hat{t}}, \quad 
	h^{\ast} := \frac{H}{\hat{h}},
\end{equation*}
and plug them in the equation \eqref{switala_scaled_with_imemersion1} to get
\begin{equation}
	\omega \frac{\hat{h}^{2}}{\hat{t}^{2}} \frac{\mathrm{d}}{\mathrm{d}t^{\ast}} \left( h^{*} \frac{\mathrm{d}h^{\ast}}{\mathrm{d}t^{\ast}}\right) + \beta \frac{\hat{h}^{2}}{\hat{t}}h^{\ast} \frac{\mathrm{d}h^{\ast}}{\mathrm{d}t^{\ast}} + \hat{h}h^{\ast} = 1.
	\label{newscaled}
\end{equation}
We shall assume that the scaled time $\hat{t}$ and height $\hat{h}$ are power law functions of $\omega$, that is:
\begin{equation*}
	\hat{t} = \omega^{a}, \quad \hat{h}= \omega^{b}.
\end{equation*}
With these assumptions, equation \eqref{newscaled} becomes
\begin{equation}
	\omega^{2b + 1 - 2a} (h^{\ast}{h^{\ast}}')' + \omega^{2b - a} \beta h^{\ast}{h^{\ast}}' + \omega^{b}h^{\ast} = 1.
	\label{newscaled_omega}
\end{equation}
Note that the capillary term on the right-hand side is of order of unity and cannot be further scaled. Therefore, the other terms in \eqref{newscaled_omega} have to be balanced with it. There are three terms on the left-hand side of different physical origins (inertia, viscosity, and gravity) and it turns out that there are four possibilities for reduction. We shall consider these four cases separately. 

\textbf{Case 1: Negligible gravity.} In this case, capillary forces are balanced by inertial and viscous terms, which are supposed to be of the order of unity. In other words, the coefficients of inertial and viscous term have to be equal to $1$, that is
\begin{equation*}
	\begin{split}
		2b+ 1 - 2a & = 0, \\
		2b - a & = 0,
	\end{split}
\end{equation*}
which implies that
\begin{equation*}
	a = 1, \quad b = \frac{1}{2}.
\end{equation*} 
Therefore, equation \eqref{newscaled_omega} becomes
\begin{equation*}
	(h^{*}{h^{*}}')' + \omega^{1/2}h^{*} + \beta h^{*}{h^{*}}'= 1.
\end{equation*}
Letting $\omega \to 0$ we obtain the following simplified equation for the case of a negligible gravity term:
\begin{equation}
	(h^{*}{h^{*}}')' + \beta h^{*}{h^{*}}' = 1.
	\label{newscaled_gravity}
\end{equation}

\textbf{Case 2: Negligible inertia.} In this case, capillary forces are balanced by the viscous and gravitational terms. Similarly as in the case above, if the coefficient of 
the inertial term has to be small, the viscous and gravitational terms should be of the order of unity. It follows that 
\begin{equation*}
	\begin{split}
		2b - a & = 0, \\
		b  & = 0,
	\end{split}
\end{equation*}
which implies that
\begin{equation*}
	a = b = 0.
\end{equation*} 
This then transforms equation \eqref{newscaled_omega} into
\begin{equation*}
	\omega (h^{*}{h^{*}}')'  + h^{*} + \beta h^{*}{h^{*}}' = 1.
\end{equation*}
Letting $\omega \to 0$ we obtain the following simplified equation for the case of a negligible inertial term:
\begin{equation}
	\beta h^{*}{h^{*}}' + h^{*} = 1.
	\label{newscaled_inertia}
\end{equation}

\textbf{Case 3: Negligible gravity and inertia.}
Since in both Case 1 and Case 2 the model was reduced for $\omega \to 0$, it is natural to raise the question whether it is possible to neglect both gravity and inertia. In that case, the capillary term should be balanced by the viscous term only, which implies that its coefficient must be of the order of unity, i.e., 
\begin{equation*}
	2 b - a = 0 \quad \Rightarrow \quad a = 2b. 
\end{equation*}
At the same time, the powers of $\omega$ in the other two coefficients have to be positive, that is, 
\begin{equation*}
	\begin{split}
		2 b + 1 - 2 a & > 0, \\
		b & > 0,
	\end{split}
\end{equation*}
which together imply the following one-parameter family of solutions: 
\begin{equation*}
	a \in (0,1), \quad 
	b \in \left( 0, \frac{1}{2} \right), \quad
	a = 2b. 
\end{equation*}
Consequently, equation \eqref{newscaled_omega} becomes
\begin{equation*}
	\omega^{1 - a} (h^{\ast}{h^{\ast}}')' + \beta h^{\ast}{h^{\ast}}' + \omega^{a/2}h^{\ast} = 1.
\end{equation*}
Letting $\omega \to 0$ we obtain the following reduced model with neglected gravity and inertia:
\begin{equation}
	\beta h^{\ast}{h^{\ast}}' = 1.
	\label{newscaled_gravity+inertia}
\end{equation}
This equation is of particular significance since in the no-slip case ($\beta = 1$) its solution is  
\begin{equation*}
	h^{\ast}(t^{\ast}) \propto \sqrt{t^{\ast}}, 
\end{equation*}
which is known as Washburn solution (in a restricted sense). 

\textbf{Case 4: Negligible viscosity.} In this case, capillary forces are balanced by inertial and gravitational terms. If the inertial and gravitational terms are of the order of unity, it follows that 
\begin{equation*}
	\begin{split}
		2b + 1 - 2a & = 0, \\
		b & = 0,
	\end{split}
\end{equation*}
which implies that
\begin{equation*}
	a = \frac{1}{2}, \quad b = 0,
\end{equation*}
and reduces equation \eqref{newscaled_omega} to 
\begin{equation*}
	(h^{*}{h^{*}}')' + h^{*} + \omega^{-1/2} \beta h^{*}{h^{*}}' = 1.
\end{equation*}
It is obvious that the viscous term cannot become negligible when $\omega \to 0$. However, it may be neglected for $\omega \to \infty$, leading to the following simplified equation:
\begin{equation}
	(h^{*}{h^{*}}')' + h^{*} = 1.
	\label{newscaled_viscosity}
\end{equation}

The reduction of the model presented above was based on the size of the dimensionless parameter $\omega$. In \eqref{eq::omega_Bo-Oh} $\omega$ was expressed in terms of two dimensionless numbers, $\mathrm{Oh}$ and $\mathrm{Bo}$, which will help reveal the physical circumstances when model reduction is feasible. Equations \eqref{newscaled_gravity}, \eqref{newscaled_inertia} and \eqref{newscaled_gravity+inertia} were derived for $\omega \to 0$. In Case 1 (negligible gravity), this is likely to occur when $\mathrm{Bo} \to 0$ (strong surface tension with respect to gravity, the liquid column is rising). In Case 2 (negligible inertia) this is expected to happen because of $\mathrm{Oh} \to \infty$ (strong viscous forces compared to inertia). In Case 3 (negligible gravity and inertia), both effects can be present, i.e., $\mathrm{Bo} \to 0$ and $\mathrm{Oh} \to \infty$. On the other hand, equation \eqref{newscaled_viscosity}, in which the viscosity term is neglected (Case 4), is obtained for $\omega \to \infty$. This is possible for $\mathrm{Oh} \to 0$, when the viscosity is small compared to inertia and surface tension. 
The other possibility would be $\mathrm{Bo} \to \infty$, which means that the gravitational effect should be dominant with respect to the surface tension. However, this is not the case since we chose the scaling in such a way that the capillary term (surface tension) is of the order of unity. 

\begin{figure}[ht] 
	\centering
	\begin{subfigure}{0.42\textwidth}
		\includegraphics[width=\linewidth]{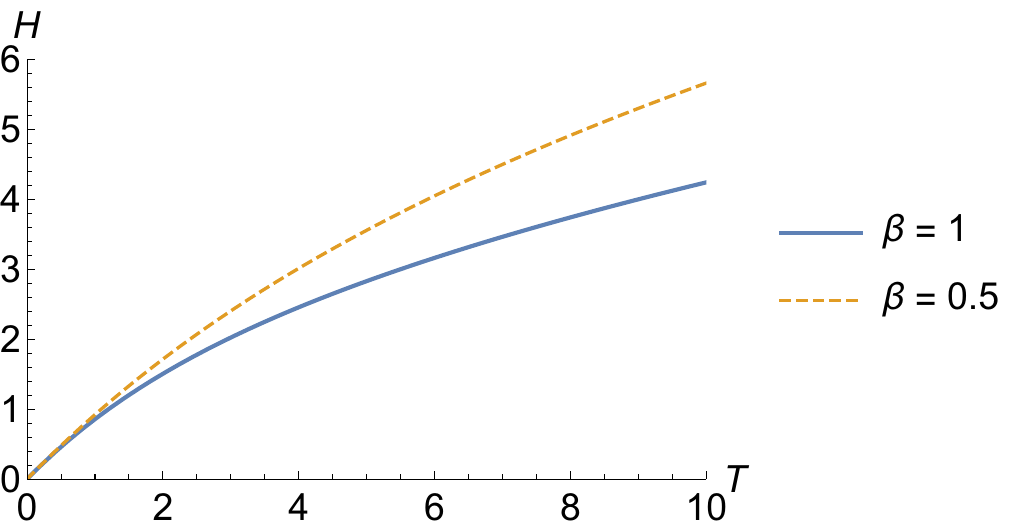}
		\caption{Case 1: \\$a = 1$, $b = 1/2$, $\omega \to 0$}
		\label{fig:subim1}
	\end{subfigure}
	\hspace{1cm}
	\begin{subfigure}{0.42\textwidth}
		\includegraphics[width=\linewidth]{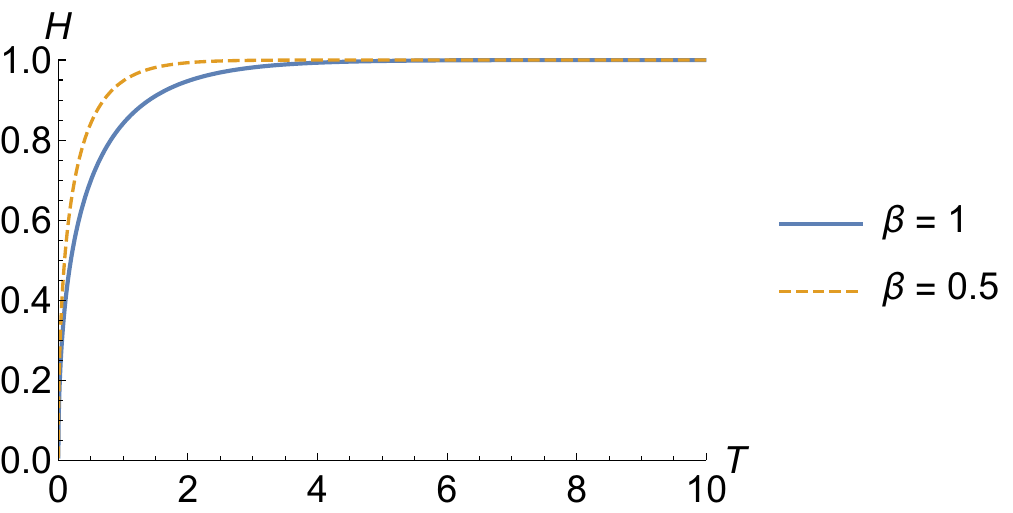}
		\caption{Case 2:\\ $a = 0$, $b = 0$, $\omega \to 0$}
		\label{fig:subim2}
	\end{subfigure}
	\\~\\
	\begin{subfigure}{0.42\textwidth}
		\includegraphics[width=\linewidth]{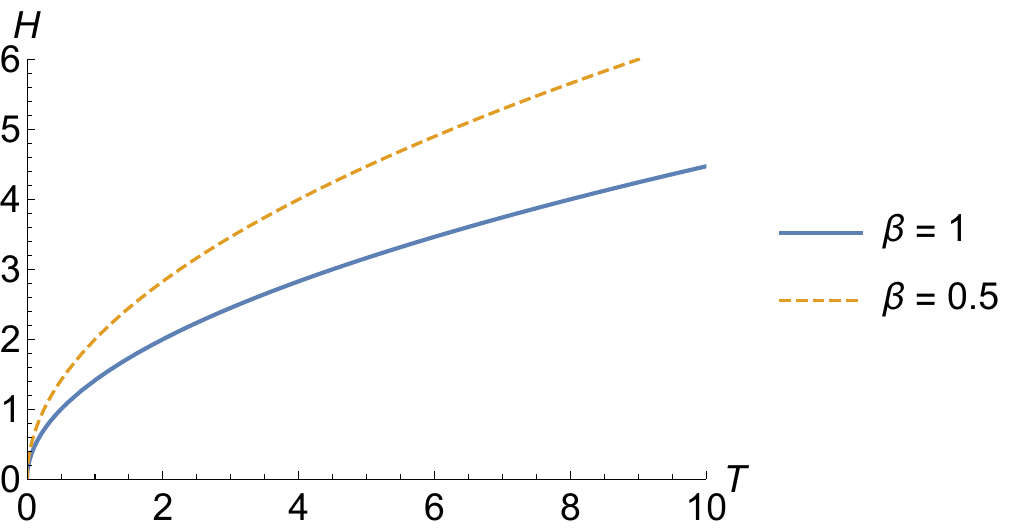}
		\caption{Case 3: \\  $a \in (0,1)$, $b \in (0, 1/2)$,  $\omega \to 0$}
		\label{fig:subim3}
	\end{subfigure}
	\hspace{1cm}
	\begin{subfigure}{0.42\textwidth}
		\includegraphics[width=\linewidth]{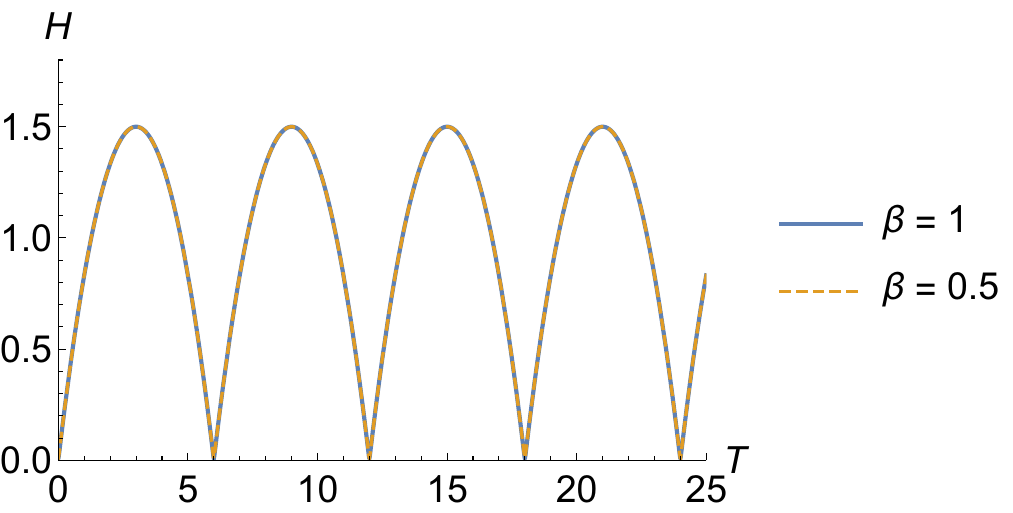}
		\caption{Case 4: \\ $a = 1/2$, $b = 0$,  $\omega \to \infty$}
		\label{fig:subim4}
	\end{subfigure}
	\caption{Flow regimes for different values of the slip parameter.}
	\label{fig:image2}
	\label{fig::FlowRegimes}
\end{figure}


Figure \ref{fig::FlowRegimes} illustrates the different flow regimes observed in our model, considering Cases 1--4 based on various reductions of the model \eqref{switala_scaled_with_imemersion1}. For each scenario, we show how the liquid height changes for two different values of the slip parameter: $\beta = 1$ (blue representing no-slip) and $\beta = 0.5$ (orange).

Cases 1 and 3 exclude gravitational effects and represent the phase in which the liquid begins to rise. Case 2 demonstrates how the viscous term stops the flow, leading to an equilibrium state with $H_{\mathrm{e}} = 1$. In all of these cases $\omega \to 0$, and the height of the liquid increases faster when the slip parameter is introduced (the slope is steeper compared to the case without slip).

In Case 4, however, the absence of viscosity leads to strong oscillatory behavior, with the solutions corresponding to slip and no-slip conditions overlapping.



\section{Global existence and uniqueness}
\label{sec::GlobalExistence}


The first step in the analysis of Washburn's equation will be to prove the existence and uniqueness of a solution. Washburn's equation is nonlinear, but its structure motivates a simple and natural transformation, 
stated in Lemma \ref{lema::transformisana_jednacina} below, that removes the nonlinearity from the leading term in the equation and relegates it to a lower-order term in the resulting transformed equation; this was already observed in \cite{plociniczak2018monotonicity}
in the special case of $\beta=1$. 
\begin{lemma} \label{lema::transformisana_jednacina}
	The initial-value problem
	\begin{equation}
		\omega(HH')' + \beta HH' + H = 1,
		\label{eq::H_jednacina}
	\end{equation}
	\begin{equation}
		H(0) = \alpha, \quad H'(0) = 0,
		\label{general_initialconditions_existence}
	\end{equation}
	can be transformed via $u(s) := \frac{1}{2}H(T)^{2}$ with $s := \frac{T}{\sqrt{\omega}}$ into an initial-value problem
	\begin{equation}
		u'' + \frac{\beta}{\sqrt{\omega}}u' + \sqrt{2u} = 1,
		\label{eq::transformisana_Ujednacina}
	\end{equation}
	\begin{equation}
		u(0) = \frac{1}{2}\alpha^{2}, \quad u'(0) = 0,
		\label{eq::transformisani_pocetniUslovi}
	\end{equation}
	with $\beta := \left(1 + \frac{4L}{R}\right)^{-1}$, $\alpha \geq 0$, and the independent variable $s \in [0,\infty)$.
\end{lemma}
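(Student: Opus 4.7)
The plan is to verify the lemma by a direct change of variables: compute $u'(s)$ and $u''(s)$ via the chain rule, substitute into \eqref{eq::H_jednacina}, and translate the initial conditions. Since nothing more than algebraic manipulation is required, the argument is essentially a routine calculation; the only substantive point to flag is the sign choice $H = \sqrt{2u}$, which relies on $H \geq 0$ (a physically reasonable assumption, consistent with $H$ representing a scaled column height).

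First I would set $s = T/\sqrt{\omega}$ and $u(s) = \tfrac{1}{2} H(T)^2$. Differentiating with respect to $s$ and using $\tfrac{dT}{ds} = \sqrt{\omega}$ gives
\begin{equation*}
u'(s) = H(T)\,H'(T)\,\sqrt{\omega},
\end{equation*}
so that $H(T) H'(T) = u'(s)/\sqrt{\omega}$. Differentiating once more,
\begin{equation*}
u''(s) = \sqrt{\omega}\,\frac{d}{ds}\bigl[ H(T) H'(T) \bigr] = \sqrt{\omega}\cdot\sqrt{\omega}\,(H H')'(T) = \omega\,(H H')'(T).
\end{equation*}

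Substituting these two identities into \eqref{eq::H_jednacina} converts the left-hand side into
\begin{equation*}
u''(s) + \frac{\beta}{\sqrt{\omega}}\,u'(s) + H(T) = 1.
\end{equation*}
Since $H \geq 0$ and $u = \tfrac{1}{2} H^2$, we have $H = \sqrt{2u}$, which yields \eqref{eq::transformisana_Ujednacina}. For the initial conditions, evaluating at $s=0$ (i.e., $T=0$) gives $u(0) = \tfrac{1}{2} H(0)^2 = \tfrac{1}{2}\alpha^2$ and $u'(0) = \sqrt{\omega}\,H(0) H'(0) = 0$, which matches \eqref{eq::transformisani_pocetniUslovi}.

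The hardest part is essentially bookkeeping: ensuring that the factors of $\sqrt{\omega}$ from the chain rule combine correctly so that the original inertial coefficient $\omega$ disappears and the viscous coefficient becomes $\beta/\sqrt{\omega}$. No regularity or monotonicity arguments are needed at this stage; the transformation is purely formal and its value lies in simplifying the leading-order term for the analysis that follows in Section~\ref{sec::GlobalExistence}.
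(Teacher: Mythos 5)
Your proposal is correct and is precisely the routine chain-rule computation that the paper declares straightforward and omits (referring only to Lemma 1 of \cite{plociniczak2018monotonicity} for the special case $\beta=1$, $\alpha=0$). The bookkeeping of the $\sqrt{\omega}$ factors checks out, and you rightly flag the one substantive point, namely that recovering $H=\sqrt{2u}$ uses the nonnegativity of $H$.
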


The proof of the lemma is straightforward and is therefore omitted; for the special case of $\beta=1$ and $\alpha =0$ the reader may wish to consult Lemma 1 in \cite{plociniczak2018monotonicity}. 

Observe that the equation \eqref{eq::transformisana_Ujednacina} admits a stationary solution $u(s) = u_{e} = \mathrm{const.}$, where
\begin{equation}
	u_{e} = \frac{1}{2}, 
	\label{eq::Jurin_u}
\end{equation}
and corresponds to the stationary solutions \eqref{eq::Jurin_height} and \eqref{eq::Jurin_DLess}.

We shall prove the existence and uniqueness of a 
global-in-time solution to the initial-value problem \eqref{eq::transformisana_Ujednacina}, \eqref{eq::transformisani_pocetniUslovi} for all $\alpha \in [0,3/2]$. Our proof covers a more general set of initial data than the argument presented in \cite{plociniczak2018monotonicity}, which was restricted to zero initial data and to $\beta=1$.

\begin{theorem} For any $S>0$, $\beta>0$, $\omega>0$, and any $\alpha \in [0,3/2]$, the initial-value problem
	\eqref{eq::transformisana_Ujednacina}, \eqref{eq::transformisani_pocetniUslovi}
	has a unique solution $u \in C^2([0,S])$. When $\alpha=0$ the solution $u$ is strictly positive on $(0,S]$, and when $\alpha \in (0,3/2]$ the solution is strictly positive on $[0,S]$; furthermore $u(s)<9/8$ for all $s \in [0,S]$ and all $\alpha \in [0,3/2)$, and $u(s)\leq 9/8$ for all $s\in [0,S]$ and all $\alpha \in [0,3/2]$. Finally, for $\alpha \in [0,3/2]$ the solution depends continuously on the initial data; i.e., the initial-value problem is well-posed in the sense of Hadamard on the interval $[0,S]$ for all $S>0$ and for all $\alpha \in [0,3/2]$. 
	\label{th::global}
\end{theorem}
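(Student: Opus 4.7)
The plan is to define a Lyapunov-type energy associated with \eqref{eq::transformisana_Ujednacina}, extract a priori bounds, and then split the existence/uniqueness argument into the two regimes $\alpha\in(0,3/2]$, where $u(0)>0$ places us in the region where $\sqrt{2u}$ is Lipschitz and Picard--Lindel\"of applies directly, and $\alpha=0$, where the nonlinearity fails to be Lipschitz at the initial datum and a regularization plus Arzel\`a--Ascoli compactness argument is needed.

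The first step is the energy estimate. Introduce the potential
\begin{equation*}
F(u):=\frac{(2u)^{3/2}}{3}-u,\qquad E(s):=\tfrac12(u'(s))^2+F(u(s)).
\end{equation*}
Multiplying the ODE by $u'$ gives $E'(s)=-(\beta/\sqrt{\omega})(u'(s))^2\le 0$. A direct inspection shows that $F(0)=F(9/8)=0$, $F<0$ on $(0,9/8)$ with minimum $F(1/2)=-1/6$, and $F>0$ on $(9/8,\infty)$. Since $\alpha\in[0,3/2]$ yields $u(0)=\alpha^2/2\in[0,9/8]$, we have $E(0)=F(\alpha^2/2)\le 0$, so $F(u(s))\le E(s)\le 0$ traps $u$ inside $[0,9/8]$; when $\alpha<3/2$ the inequality $E(0)<0$ is strict and upgrades this to $u(s)<9/8$. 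The same identity gives the uniform bound $(u')^2\le 2(E(0)-F(u))\le 1/3$, and the ODE then delivers a uniform bound on $|u''|$.

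Positivity and global existence follow next. If a solution hit $u=0$ at a first time $s^\star>0$, then $E(s^\star)=\tfrac12(u'(s^\star))^2\ge 0$; for $\alpha\in(0,3/2)$ this contradicts $E(s^\star)\le E(0)<0$, and for $\alpha=3/2$ it forces $E\equiv 0$ on $[0,s^\star]$, hence $u'\equiv 0$ and $u\equiv 9/8$, contradicting $u(s^\star)=0$. For $\alpha=0$ the ODE gives $u''(0)=1$, so $u>0$ for small $s>0$, after which the same dichotomy excludes any later return to zero. With the Lipschitz property of the right-hand side of the first-order system $(u',v')=(v,\,1-\sqrt{2u}-(\beta/\sqrt{\omega})v)$ secured on every set $u\ge u_->0$, Picard--Lindel\"of together with the a priori bounds above yields a unique $C^2$ solution on $[0,S]$ whenever $\alpha\in(0,3/2]$. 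For $\alpha=0$ I would regularize with the sequence $\alpha_n:=1/n$; the associated solutions $u_n$ satisfy the uniform bounds in $C^2([0,S])$ proved above, so Arzel\`a--Ascoli produces a subsequence converging in $C^1([0,S])$ to some $u$, and the uniform continuity of $v\mapsto\sqrt{2v}$ on $[0,9/8]$ lets us pass to the limit in the ODE to obtain $u\in C^2([0,S])$ with $u(0)=u'(0)=0$.

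The main obstacle is the uniqueness assertion when $\alpha=0$, where the $\tfrac12$-H\"older nonlinearity at the origin blocks the direct Picard argument. My strategy is to first use the ODE to show that any solution with $u(0)=u'(0)=0$ admits the matched expansion $u(s)=\tfrac12 s^2+o(s^2)$ as $s\to 0^+$, so that $\sqrt{2u_i(s)}\ge c_0 s$ on $(0,\delta]$ for some $c_0,\delta>0$. For two such solutions the difference $w:=u_1-u_2$ satisfies
\begin{equation*}
w''+\frac{\beta}{\sqrt{\omega}}w'+\frac{2w}{\sqrt{2u_1}+\sqrt{2u_2}}=0,\qquad w(0)=w'(0)=0,
\end{equation*}
whose zeroth-order coefficient is bounded on $[\delta,S]$ but only of order $1/s$ near $0$. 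On $[\delta,S]$ a standard Gronwall argument then propagates $w\equiv 0$, while on $[0,\delta]$ the substitution $w=s^2\tilde w$ desingularizes the equation and reduces uniqueness to a classical ODE argument for $\tilde w$ with $\tilde w(0)=0$. Once uniqueness has been established, continuous dependence in the maximum norm is a corollary of the same compactness machinery: if $\alpha_n\to\alpha$ in $[0,3/2]$, the sequence $\{u_n\}$ is precompact in $C^1([0,S])$ by the uniform bounds, every subsequential limit solves the limiting initial-value problem and hence equals $u$ by uniqueness, so the full sequence converges uniformly on $[0,S]$.
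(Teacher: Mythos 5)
Your proposal is essentially correct and reaches all the assertions of Theorem \ref{th::global}, but by a genuinely different route from the paper's. The paper regularizes the \emph{nonlinearity}: it first proves equivalence of \eqref{eq::transformisana_Ujednacina} with the problem in which $\sqrt{2u}$ is replaced by $\sqrt{2[u]_+}$, then solves the $\varepsilon$-regularized problem with $\sqrt{2[u]_+ +\varepsilon}$ by Picard--Lindel\"of (the regularized right-hand side is globally Lipschitz for every $\alpha$), and passes to the limit $\varepsilon \to 0_+$ via Arzel\`a--Ascoli in a Volterra integral reformulation. You instead handle $\alpha\in(0,3/2]$ by direct Picard--Lindel\"of plus continuation, which is legitimate because your energy trap confines $u$ to a compact subset of $\{u>0\}$ where $\sqrt{2u}$ is Lipschitz (for $\alpha=3/2$ note that $u''(0)=-1/2<0$ makes $E$ strictly negative immediately, restoring the positive lower bound), and you regularize the degenerate case $\alpha=0$ by perturbing the \emph{initial datum} ($\alpha_n=1/n$) rather than the equation. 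The sharpest divergence is uniqueness at $\alpha=0$: the paper invokes Precup's Theorem 11.3 — a fixed-point theorem for decreasing compact operators on the order interval between $u_0(s)=s^2/6$ and $v_0(s)=s^2/2$ — to get local uniqueness on $[0,s_*]$ and then extends by Gr\"onwall, whereas you exploit the Taylor matching $u_i(s)=\tfrac12 s^2+o(s^2)$ (equivalently $\sqrt{2u_1}+\sqrt{2u_2}\ge 2c_0 s$) to control the singular coefficient $q(s)=2/(\sqrt{2u_1}+\sqrt{2u_2})=O(1/s)$ in the linear equation for $w=u_1-u_2$. Your route is more elementary and self-contained, avoiding the order-theoretic machinery and the verification of Precup's several hypotheses; the paper's route avoids any asymptotic analysis of solutions near $s=0$.

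Two points in your sketch need repair, both within your own toolkit. First, the claim that $w=s^2\tilde w$ ``desingularizes'' the equation is inaccurate as stated: the transformed equation is $\tilde w''+(4/s+\gamma)\tilde w'+(2/s^2+2\gamma/s+q)\tilde w=0$, which is still singular at $s=0$ (its Euler part has indicial roots $-1,-2$ — favorable, but not classical Picard territory). The clean completion bypasses the substitution: with $w(0)=w'(0)=0$, set $M(s):=\operatorname{max}_{[0,s]}|w'|$, so that $|w(t)|\le tM(t)$; then $q(t)\le (c_0t)^{-1}$ yields
\begin{equation*}
|w'(s)| \le \int_0^s \left(\gamma\,|w'(t)| + q(t)\,|w(t)|\right)\dd t \le \left(\gamma + c_0^{-1}\right) s\, M(s),
\end{equation*}
whence $M(s)\le (\gamma+c_0^{-1})sM(s)$ and $M\equiv 0$ on a small interval $[0,\delta]$, after which your Gr\"onwall argument on $[\delta,S]$ finishes the job. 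Second, your assertion ``when $\alpha<3/2$ the inequality $E(0)<0$ is strict'' fails at $\alpha=0$, where $E(0)=F(0)=0$; the strict bound $u(s)<9/8$ for $\alpha=0$ instead follows from the dichotomy you already use elsewhere: $u(s_1)=9/8$ at some $s_1>0$ would force $E\equiv 0$ on $[0,s_1]$, hence $u'\equiv 0$ and $u\equiv 0$ there, contradicting $u(s_1)=9/8$.
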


\begin{proof} 
	We begin by noting that since $u$ is, by definition, a nonnegative function (cf. Lemma \ref{lema::transformisana_jednacina}), if $u$ is a (nonnegative) solution to the initial-value problem \eqref{eq::transformisana_Ujednacina}, \eqref{eq::transformisani_pocetniUslovi} (assuming that such a $u$ exists), then it is automatically a solution to the initial-value problem
	\begin{equation}
		u'' + \frac{\beta}{\sqrt{\omega}}u' + \sqrt{2[u]_{+}} = 1,
		\label{eq::transformisana_Ujednacina1}
	\end{equation}
	\begin{equation}
		u(0) = \frac{1}{2}\alpha^{2}, \quad u'(0) = 0,
		\label{eq::transformisani_pocetniUslovi1}
	\end{equation}
	where $[u]_{+} := \operatorname{max}\{u,0\} = \frac{1}{2}(u+|u|)\geq 0$. Conversely, if $u$ is a solution of \eqref{eq::transformisana_Ujednacina1}, 
	\eqref{eq::transformisani_pocetniUslovi1} (assuming that such a $u$ exists), then it is straightforward to prove that $u$ must be nonnegative, i.e., $[u]_{+}=u$, and therefore $u$ is then a solution to the initial-value problem \eqref{eq::transformisana_Ujednacina}, 
	\eqref{eq::transformisani_pocetniUslovi}. Indeed, let $[u]_{-}:=\operatorname{min}\{u,0\}= \frac{1}{2}(u-|u|)\leq 0$. Multiplying \eqref{eq::transformisana_Ujednacina1} by $([u]_{-})'$ yields the equality
	\begin{align*}
		\frac{\dd}{\dd s}\left(\frac{1}{2}\big[([u]_{-})'\big]^2(s) - [u]_{-}(s) \right) = -\frac{\beta}{\sqrt{\omega}} \big[([u]_{-})'\big]^2(s) \leq 0,
	\end{align*}
	because $\sqrt{2[u]_{+}(s)} \, ([u]_{-})'(s)=0$. (Note that if $[u]_{+}(s)>0$ at some $s \in [0,S]$, then by continuity $[u]_{+}$ is strictly positive in some nonempty neighborhood of such an $s$; but then $[u]_{-}$ is identically zero in that same neighborhood, whereby $([u]_{-})'$ is also identically zero in that same neighborhood). Therefore, upon integration, 
	\begin{align}\label{eq::bd1}
		\frac{1}{2}\big[([u]_{-})'\big]^2(s) + (- [u]_{-}(s)) \leq \frac{1}{2}\big[([u]_{-})'\big]^2(0) + (- [u]_{-}(0)).
	\end{align}
	As $\alpha^2/2\geq 0$, it follows that $[u]_{-}(0)=0$. Furthermore, since $([u]_{-})'(s) = \frac{1}{2}(u'(s) - u'(s) \operatorname{sign}(u(s))$ and $u'(0)=0$, also $([u]_{-})'(0)=0$. Hence, the right-hand side of \eqref{eq::bd1} is equal to 0. As the left-hand side of \eqref{eq::bd1}, which is the sum of two nonnegative terms, is nonnegative, it must also be equal to zero. It then follows that in particular $(- [u]_{-}(s))=0$ for all $s \geq 0$ for which a solution to 
	\eqref{eq::transformisana_Ujednacina1}, 
	\eqref{eq::transformisani_pocetniUslovi1}  exists. This then implies that, if a solution to \eqref{eq::transformisana_Ujednacina1}, 
	\eqref{eq::transformisani_pocetniUslovi1} exists, it must be nonnegative. Hence, any solution to \eqref{eq::transformisana_Ujednacina1}, 
	\eqref{eq::transformisani_pocetniUslovi1} is a solution to \eqref{eq::transformisana_Ujednacina}, 
	\eqref{eq::transformisani_pocetniUslovi}. 
	
	Having shown that problems \eqref{eq::transformisana_Ujednacina}, 
	\eqref{eq::transformisani_pocetniUslovi} and \eqref{eq::transformisana_Ujednacina1}, 
	\eqref{eq::transformisani_pocetniUslovi1} are equivalent, and that if problem \eqref{eq::transformisana_Ujednacina1}, 
	\eqref{eq::transformisani_pocetniUslovi1} has a solution then this must be nonnegative, we shall focus on showing the existence of a unique solution to the latter on the entire interval $[0,S]$. We shall do so by first showing the existence of a unique solution to a regularized version of problem \eqref{eq::transformisana_Ujednacina1}, 
	\eqref{eq::transformisani_pocetniUslovi1} on the entire interval $[0,S]$. To this end, for $\varepsilon \in (0,1)$, we consider the following regularized initial-value problem:
	\begin{equation}
		\ue''+ \frac{\beta}{\sqrt{\omega}}\ue' + \sqrt{2[\ue]_{+} + \varepsilon} = 1,
		\label{eq::transformisana_Ujednacinae}
	\end{equation}
	\begin{equation}
		\ue(0) = \frac{1}{2}\alpha^{2}, \quad \ue'(0) = 0.
		\label{eq::transformisani_pocetniUslovie}
	\end{equation}
	As the function $z \in \mathbb{R} \mapsto \sqrt{2[z]_{+} + \varepsilon} \in \mathbb{R}_{\geq \varepsilon}$ is globally Lipschitz, the existence of a unique solution $\ue \in C^2([0,S])$ to the initial-value-problem \eqref{eq::transformisana_Ujednacinae}, \eqref{eq::transformisani_pocetniUslovie}, for any $S>0$, follows by a straightforward application of the Picard--Lindel{\"o}f theorem. 
	
	\smallskip
	
	We shall prove the \textit{existence} of a solution to the initial-value problem
	\eqref{eq::transformisana_Ujednacina1}, 
	\eqref{eq::transformisani_pocetniUslovi1} by passing to the limit $\varepsilon \to 0_+$ in the problem \eqref{eq::transformisana_Ujednacinae}, 
	\eqref{eq::transformisani_pocetniUslovie}.
	To be able to do this, we need suitable uniform bounds on the family of functions $(\ue)_{0<\varepsilon<1}$, which we shall now derive. By multiplying equation \eqref{eq::transformisana_Ujednacinae} by $\ue'$, we have that 
	\begin{align*}
		\frac{1}{2} \frac{\dd}{\dd s} [\ue'(s)]^2 + \frac{\beta}{\sqrt{\omega}}[\ue'(s)]^2 = \ue'(s) - \ue'(s) \sqrt{2[\ue(s)]_{+} + \varepsilon}.
	\end{align*}
	Hence, upon integration and using the elementary inequality $ab \leq \frac{\delta}{2}a^2 + \frac{1}{2\delta}b^2$ with $\delta = \beta/\sqrt{\omega}$, we find that
	\begin{align*}
		\frac{1}{2} [\ue'(s)]^2 + \frac{\beta}{\sqrt{\omega}}\int_0^s  [\ue'(t)]^2 \dd t &= \int_0^s \ue'(t) \dd t - \int_0^s \ue'(t) \sqrt{2[\ue(t)]_{+} + \varepsilon}\dd t\\
		&\leq \int_0^s |\ue'(t)| \dd t + \frac{\beta}{2\sqrt{\omega}}\int_0^s  [\ue'(t)]^2 \dd t + \frac{\sqrt{\omega}}{2\beta}\int_0^s (2[\ue(t)]_{+} + \varepsilon)\dd t.
	\end{align*}
	Thus, and because $[\ue(t)]_{+}  \leq |\ue(t)|$, it follows that
	\begin{align*}
		\frac{1}{2} [\ue'(s)]^2 + \frac{\beta}{2\sqrt{\omega}}\int_0^s  [\ue'(t)]^2 \dd t & \leq \int_0^s |\ue'(t)| \dd t + \frac{\sqrt{\omega}}{2\beta}\int_0^s (2|\ue(t)| + \varepsilon)\dd t.
	\end{align*}
	As $\ue(s) = \frac12\alpha^2 + \int_0^s \ue'(t) \dd t$, whereby $2|\ue(t)| \leq  \alpha^2 + 2\int_0^t |\ue'(\tau)| \dd \tau$ for all $t \in [0,s]\subseteq [0,S]$, we have that
	\begin{align*}
		\frac{1}{2} [\ue'(s)]^2 + \frac{\beta}{2\sqrt{\omega}}\int_0^s  [\ue'(t)]^2 \dd t  & \leq \int_0^s |\ue'(t)|\dd t + \frac{\sqrt{\omega}}{2\beta}\int_0^s \left(\alpha^2 + 2\int_0^t |\ue'(\tau)|\dd \tau + \varepsilon\right)\dd t\\
		& \leq \frac{s\sqrt{\omega}}{2\beta}\left(\alpha^2+ \varepsilon\right) + \left(1+ \frac{s\sqrt{\omega}}{\beta}\right) \int_0^s |\ue'(t)| \dd t\\
		& \leq \frac{s\sqrt{\omega}}{2\beta}\left(\alpha^2+ \varepsilon\right) + \left(1+ \frac{s\sqrt{\omega}}{\beta}\right) \sqrt{s}\left(\int_0^s [\ue'(t)]^2 \dd t\right)^{1/2}\\
		& \leq  \frac{s\sqrt{\omega}}{2\beta}\left(\alpha^2+ \varepsilon\right) + \frac{\sqrt{\omega}}{\beta}\left(1 + \frac{s \sqrt{\omega}}{\beta}\right)^2 s + \frac{\beta}{4\sqrt{\omega}}\int_0^s [\ue'(t)]^2 \dd t.
	\end{align*}
	Therefore, for any $S>0$ and all $s \in [0,S]$ and all $\varepsilon \in (0,1)$, 
	\begin{align*}
		\frac{1}{2} [\ue'(s)]^2 + \frac{\beta}{4\sqrt{\omega}}\int_0^s  [\ue'(t)]^2 \dd t  \leq
		\frac{S\sqrt{\omega}}{2\beta}\left(\alpha^2+ 1 \right) + \frac{\sqrt{\omega}}{\beta}\left(1 + \frac{S \sqrt{\omega}}{\beta}\right)^2 S.
	\end{align*}
	This then implies that
	\begin{align}\label{eq::BD1} \operatorname{max}_{s \in [0,S]}|\ue'(s)| \leq C,
	\end{align}
	where $C$ denotes a generic positive constant independent of $\varepsilon \in (0,1)$; therefore, again because  $\ue(s) = \alpha^2/2 + \int_0^s \ue'(t) \dd t$, we also have that 
	\[ \operatorname{max}_{s \in [0,S]}|\ue(s)| \leq C.\]
	Thus we infer that the family of functions $(\ue)_{0<\varepsilon<1} \subset C^2([0,S])$ is uniformly bounded in $C([0,S])$ and that it is uniformly equicontinuous. Indeed, by the mean value theorem and \eqref{eq::BD1}, $|\ue(s_1) - \ue(s_2)| \leq C|s_1 - s_2|$ for all $s_1, s_2 \in [0,S]$, uniformly in $\varepsilon \in (0,1)$, so the uniform equicontinuity of the family directly follows. Hence, by the Arzel\`{a}--Ascoli theorem, there exists a subsequence of $(\ue)_{0<\varepsilon<1}$ (not explicitly indicated), which uniformly converges on $[0,S]$ to a continuous function $u \in C([0,S])$ as $\varepsilon \to 0_+$. Next, we shall show that $u$ is in fact a solution to the initial-value problem \eqref{eq::transformisana_Ujednacina1}, 
	\eqref{eq::transformisani_pocetniUslovi1}. We shall do so by restating the initial-value problem \eqref{eq::transformisana_Ujednacinae}, 
	\eqref{eq::transformisani_pocetniUslovie} in an equivalent form as a nonlinear Volterra integral equation of the second kind, and passing to the limit $\varepsilon \to 0_+$ in this integral equation. 
	
	By integrating \eqref{eq::transformisana_Ujednacinae} from $0$ to $s \in (0,S]$, using $\mathrm{e}^{\beta s/\sqrt{\omega}}$ as an integrating factor, and then integrating the resulting equality again, it follows that $\ue$ satisfies the following nonlinear Volterra integral equation:
	\begin{equation*}
		\ue(s) = \frac{\alpha^2}{2} + \int_0^s \frac{\sqrt{\omega}}{\beta}\bigg[1-\mathrm{e}^{\frac{-\beta (s-t)}{\sqrt{\omega}}}\bigg] \bigg(1 - \sqrt{2[\ue(t)]_{+} + \varepsilon}\bigg)\dd t, \quad s \in [0,S]. 
	\end{equation*}
	By passing to the limit $\varepsilon \to 0_+$ in this equation over the uniformly convergent subsequence whose existence has been guaranteed above by the Arzel\`{a}--Ascoli theorem, it follows that the limiting function $u$ satisfies
	\begin{equation}\label{eq::AA}
		u(s) = \frac{\alpha^2}{2} + 
		\int_0^s \frac{\sqrt{\omega}}{\beta}\bigg[1-\mathrm{e}^{\frac{-\beta (s-t)}{\sqrt{\omega}}}\bigg] \bigg(1 - \sqrt{2[u(t)]_{+} }\bigg)\dd t, \quad s \in [0,S]. 
	\end{equation}
	Clearly, $u(0)=\alpha^2/2$. As $u \in C([0,S])$, the right-hand side of \eqref{eq::AA} belongs to $C^1([0,S])$, whereby the function $u$ appearing on the left-hand side of \eqref{eq::AA} also belongs to $C^1([0,S])$. We are therefore allowed to differentiate the equality \eqref{eq::AA}, and we find that 
	\begin{equation}\label{eq::AA+} u'(s) = \mathrm{e}^{-\frac{\beta s}{\sqrt{\omega}}}\int_0^s  \mathrm{e}^{\frac{\beta t}{\sqrt{\omega}}} (1- \sqrt{2[u(t)]_{+}}) \dd t.
	\end{equation}
	Obviously $u'(0)=0$. Since the integrand appearing in \eqref{eq::AA+} belongs to $C([0,S])$, and therefore the right-hand side of \eqref{eq::AA+} belongs to $C^1([0,S])$, the same must be true of the left-hand side of \eqref{eq::AA+}; hence, $u \in C^2([0,S])$. We are therefore allowed to differentiate \eqref{eq::AA+}; using the expression for $u'(s)$ stated in \eqref{eq::AA+} in the equality resulting from differentiating \eqref{eq::AA+}, we then find that  
	\[u''(s)= - \frac{\beta}{\sqrt{\omega}}u'(s) + (1 - \sqrt{2[u(s)]_{+}}). \]
	Thus we have shown that the function $u$ satisfies the differential equation \eqref{eq::transformisana_Ujednacina1} and the initial conditions \eqref{eq::transformisani_pocetniUslovi1}. In other words, we have proved the existence of a solution $u \in C^2([0,S])$ to the initial-value problem \eqref{eq::transformisana_Ujednacina1}, 
	\eqref{eq::transformisani_pocetniUslovi1} on any interval $[0,S]$. As we have already shown that all solutions to this initial-value problem are nonnegative, it follows that we have proved the existence of a nonnegative solution $u \in C^2([0,S])$ to the initial-value problem \eqref{eq::transformisana_Ujednacina}, 
	\eqref{eq::transformisani_pocetniUslovi}
	on any interval $[0,S]$. 
	
	\smallskip
	
	Let us now show that all solutions to the initial-value problem \eqref{eq::transformisana_Ujednacina}, 
	\eqref{eq::transformisani_pocetniUslovi} must, in fact, be strictly positive throughout the interval $[0,S]$ in the case of $\alpha \in (0,3/2]$, and over the half-open interval $(0,S]$ in the case of $\alpha=0$. Having done so, we shall be able to prove uniqueness of the solution. By multiplying \eqref{eq::transformisana_Ujednacina} by $u'(s)$ it follows that 
	\begin{align*}
		\frac{\dd}{\dd s}\left[\frac{1}{2} [u'(s)]^2 - u(s) + \frac{2\sqrt{2}}{3}[u(s)]^{3/2}\right] =  - \frac{\beta}{\sqrt{\omega}}[u'(s)]^2.
	\end{align*}
	Letting 
	\[ E(s):= \frac{1}{2}[u'(s)]^2 - u(s) + \frac{2\sqrt{2}}{3}[u(s)]^{3/2},\]
	it follows that $E$ is a monotonically nonincreasing function of $s$ and 
	\[ E(0) = -\frac{\alpha^2}{2}\bigg(1- \frac{2\alpha}{3}\bigg).\]
	We will first show that $E(s) < 0$ for all $s \in (0, S]$ and all $\alpha \in [0, 3/2]$.
	
	\smallskip
	
	Clearly, if $\alpha \in (0,3/2)$ then $E(0)<0$, and therefore $E(s)<0$ for all $s \in [0,S]$, because $E$ is monotonically nonincreasing.

	If,  on the other hand, $\alpha = 0$, then $E(0)=0$ and since $E(s)$ is a nonincreasing function of $s$, it follows that $E(s)\leq 0$ for all $s \in [0,S]$. We will show that, in fact, $E(s)<0$ for all $s \in (0,S]$. Since $u \in C^2([0,S]$) and 
	\[ u''(s) = - \frac{\beta}{\sqrt{\omega}}u'(s) + (1 - \sqrt{2u(s)}),\]
	by passing to the limit $s \to 0_+$ and recalling that $u(0)=0$ and $u'(0)=0$, it follows that $u''(0)=1$. Since $u'' \in C([0,S])$, we deduce that $u''$ must be strictly positive in a small neighborhood of $s=0$, which means that $u'$ is a strictly monotonically increasing function in that neighborhood. As $u'(0)=0$, it then follows that $u'(s)>0$ for all $s>0$ in that small neighborhood of $s=0$. Because $E(0)=0$ and $E(s)\leq 0$, $E(s)$ cannot be identically equal to zero over this small neighborhood of $s=0$, as in that case we would have that 
	\[ \frac{1}{2}[u'(s)]^2 - u(s) + \frac{2\sqrt{2}}{3}[u(s)]^{3/2} = 0\]
	over that interval, and by differentiating this equality (recall that $u \in C^2([0,S])$) it would follow that 
	\[ u'(s) u''(s) - u'(s) + \sqrt{2}[u(s)]^{1/2} u'(s) = 0 \]
	for all such $s$. However, $u'(s)>0$ for all such $s$, and therefore it would follow that 
	\[ u''(s) - 1 + \sqrt{2}[u(s)]^{1/2} = 0.\]
	Substitution of $u''(s) = 1 - \sqrt{2}[u(s)]^{1/2}$ into the differential equation
	\eqref{eq::transformisana_Ujednacina} would then imply that $(\beta/\sqrt{\omega})u'(s)=0$ on that interval, which is impossible because $u'(s)>0$ there. Thus, we deduce that $E(s)<0$ for all $s>0$ sufficiently close to $s=0$. However, since $E$ is nonincreasing, we infer that $E(s)<0$ for all $s \in (0,S]$.
	
	If $\alpha =3/2$, then $E(0)=0$ and since $E(s)$ is a nonincreasing function of $s$, it follows that $E(s)\leq 0$ for all $s \in [0,S]$. We will show by an argument similar to the one presented in the case of $\alpha=0$ that, in fact, $E(s)<0$ for all $s \in (0,S]$. Since $u \in C^2([0,S]$) and 
	\[ u''(s) = - \frac{\beta}{\sqrt{\omega}}u'(s) + (1 - \sqrt{2u(s)}),\]
	by passing to the limit $s \to 0_+$ and recalling that $u(0)=\alpha^2/2 = 9/8$ and $u'(0)=0$, it follows that $u''(0)=-1/2$. Since $u'' \in C([0,S])$, we deduce that $u''$ must be strictly negative in a small neighborhood of $s=0$, which means that $u'$ is a strictly monotonically decreasing function in that neighborhood. As $u'(0)=0$, it then follows that $u'(s)<0$ for all $s>0$ in that small neighborhood of $s=0$. Because $E(0)=0$, $E(s)\leq 0$, and $E(s)$ cannot be identically equal to zero in this small neighborhood of $s=0$ (by the same argument as in the case of $\alpha=0$ above, but now using $u'(s)<0$ for all $s>0$ in this small neighborhood of $s=0$), the fact that $E$ is nonincreasing implies that $E(s)<0$ for all $s$ sufficiently close to $s=0$. However, since $E$ is nonincreasing on the interval $[0,S]$, it then follows that $E(s)<0$ for all $s \in (0,S]$.
	
	Thus we have shown that, for all $\alpha \in [0,3/2]$, $E(s)<0$ for all $s \in (0,S]$. Hence,
	\[ 0 > E(s) =  \frac{1}{2}[u'(s)]^2 - u(s) + \frac{2\sqrt{2}}{3}[u(s)]^{3/2} >  -u(s) + \frac{2\sqrt{2}}{3}[u(s)]^{3/2} = - u(s) \bigg[1 - \sqrt{\frac{8 u(s)}{9}}\bigg]\]
	for all $s \in (0,S]$. Consequently (recall that $u(s) \geq 0$), we have that 
	\begin{equation} \label{eq::EnergyEstimate}
		0 < u(s) < \frac{9}{8} \qquad \mbox{for all $s \in (0,S]$}.
	\end{equation}
	Thus we have shown that the initial-value problem 
	\eqref{eq::transformisana_Ujednacina}, 
	\eqref{eq::transformisani_pocetniUslovi} has a solution $u \in C^2([0,S])$ on any interval $[0,S]$ of the real line and that all solutions of this initial-value problem must be positive on $(0,S]$ when $\alpha=0$ and on the whole of $[0,S]$ when $\alpha \in (0,3/2]$; furthermore, all solutions must satisfy the upper bound $u(s) \leq  9/8$ for all $s \in [0,S]$ (thanks to \eqref{eq::EnergyEstimate} and because $u(0)=\alpha^2/2 \leq 9/8$ for all $\alpha \in [0,3/2]$).
	
	\smallskip
	
	It remains to prove the \textit{uniqueness} of the solution to the initial-value problem \eqref{eq::transformisana_Ujednacina}, 
	\eqref{eq::transformisani_pocetniUslovi}. Let us first consider the case where $\alpha \in (0,3/2]$. Suppose that $u$ and $v$ are two solutions to the initial-value problem \eqref{eq::transformisana_Ujednacina}, 
	\eqref{eq::transformisani_pocetniUslovi}. As both must be positive functions on the closed interval $[0,S]$, because of the continuity of $u$ and $v$ each of these functions can only have a finite number of minima on $[0,S]$ and at each of these minima both $u$ and $v$ must be strictly positive. Thus, there exists a positive real number $\delta$ such that $u(s)\geq \delta$ and $v(s)\geq \delta$ for all $s \in [0,S]$. Equation \eqref{eq::AA} then implies that
	\[ |u(s) - v(s)| \leq  \frac{\sqrt{2\omega}}{\beta}\int_0^s |\sqrt{u(t)} - \sqrt{v(t)}| \dd t\leq  \frac{\sqrt{2\omega}}{\beta} \frac{1}{2\sqrt{\delta}}
	\int_0^s |u(t) - v(t)| \dd t\]
	for all $s \in [0,S]$. It then follows from this inequality by Gr\"onwall's lemma that $|u(s) - v(s)|=0$ for all $s \in [0,S]$. Thus we have shown the uniqueness of the solution when $\alpha \in (0,3/2]$.

	Now suppose that $\alpha=0$. Our aim is to apply Theorem 11.3 of Precup \cite{precup2002methods},\footnote{For the 
		reader's convenience, we include here the statement of this result (in an abbreviated form).\\ \textbf{Theorem 11.3} (of \cite{precup2002methods}, p.171) Let $(X,K)$ be an ordered Banach space (i.e., a Banach space endowed with a closed cone $K$) and let $[u_0,v_0]:=(u_0+K)\cap (v_0-K)=\{v \in X\,:\, u_0 \leq v \leq v_0\} \subset X$ be an \textit{order interval} with $0 \leq u_0 \leq v_0$ such that there is a real number $\lambda$ with
		\[ \lambda >0, \quad \lambda v_0 \leq u_0.\]
		Assume that $T\,:\,[0,v_0] \to X$ is decreasing on $[0,v_0]$, continuous on $[u_0,v_0]$, $T([u_0,v_0]) \subset [u_0,v_0]$, and that there is a function $\chi\,:\, (0,1) \to (0,\infty)$ such that
		\[ \chi(\lambda) < \frac{1}{\lambda}, \quad T(\lambda v) \leq  \chi(\lambda) T(v)\]
		for all $\lambda \in (0,1)$, $v \in [u_0,v_0]$. In addition assume that $K$ is a normal cone and $T$ is compact. Then, $T$ has a unique fixed point $u$ in $[u_0,v_0]$.} 
	to deduce the existence of a \textit{unique} (local) solution to 
	the initial-value problem on the closed interval $[0,s_*]$, where $s_*:= \operatorname{min}\{1/2,\sqrt{\omega}/\beta\}$. To be able to do so, we need to check the assumptions of that theorem. 
	\begin{itemize}
		\item
		We begin by noting that $X:=C([0,s_*])$ is an ordered Banach space with the ordering $\leq$ (i.e., given $v, w \in X$, we say that $v \leq w$ if, and only if, $v(s) \leq w(s)$ for all $s \in [0,s_*]$) and 
		\[K:=\{ v \in X\,:\, v(s) \geq 0\quad \forall\, s \in [0,s_*]\}\] 
		is a normal cone in $X$. We recall that a cone $K \subset X$ is normal if, for any pair of sequences $(w_k)_{k=1}^\infty \subset K$ and $(v_k)_{k=1}^\infty \subset K$ such that $\lim_{k\to \infty} v_k = 0$ in $X$ and $0\leq w_k \leq v_k$, it follows that $\lim_{k\to \infty} w_k= 0$ in $X$.
		That our $K$, as defined above, is a cone in $X$ is trivially true, as any nonnegative constant multiple of a nonnegative continuous function defined on the interval $[0,s_*]$ is a nonnegative continuous function defined on the interval $[0,s_*]$. To prove that our cone $K$ is normal, we note that if $(v_k)_{k=1}^\infty \subset K$ converges to $0$ in $X$
		(i.e., it converges to 0 uniformly on the interval $[0,s_*]$), then, because $0 \leq \operatorname{max}_{s \in [0,s_*]} w_k(s) \leq \operatorname{max}_{s \in [0,s_*]} v_k(s) \to 0$ as $k \to \infty$, it follows that 
		the sequence $(w_k)_{k=1}^\infty \subset K$ must also converge to $0$ uniformly, i.e., $\lim_{k\to \infty} w_k = 0$ in $X$.

		\item Next, for $s \in [0,s_*]$, we let $u_0(s):=s^2/6$ and $v_0(s):=s^2/2$
		and we take $[u_0,v_0]$ as the order interval in Precup's Theorem 11.3. Because the function $z \in \mathbb{R}_{\geq 0} \mapsto 1 - \sqrt{2z} \in \mathbb{R}_{\leq 1}$ is 
		decreasing and $0 \leq 1 - \mathrm{e}^{-z} \leq 1$ for all $z \geq 0$, it follows that the
		map $T\,:\, [0,v_0] \to X$ defined by
		\[[T(v)](s):=\frac{\sqrt{\omega}}{\beta} \int_0^s \bigg[1-\mathrm{e}^{\frac{-\beta (s-t)}{\sqrt{\omega}}}\bigg] \bigg(1 - \sqrt{2v(t) }\bigg)\dd t \]
		is decreasing, in the sense that if $0 \leq  w \leq v$ on $[0,s_*]$ then $T(0) \geq T(w)\geq T(v)$ on $[0,s_*]$. 
		
		\item Furthermore, the map $T: [u_0,v_0] \to X$ is continuous. Indeed, if $u_0 \leq v_k  \leq v_0$ and $v_k \to v$ in $X$, then $u_0 \leq v \leq v_0$; also, $T(v_k) \to T(v)$ in $X$, thanks to the inequality $|\sqrt{2v_k(t)} - \sqrt{2v(t)}| \leq \sqrt{2}|v_k(t) - v(t)|^{1/2}$, which implies that
		$\operatorname{max}_{s \in [0,s_*]}|[T(v_k)](s)-[T(v)](s)| \leq (S/\sqrt{2}) \operatorname{max}_{s \in [0,s_*]}|v_k(s)-v(s)|^{1/2} \to 0$ as $k \to \infty$.
		
		\item As a matter of fact, the map $T: [u_0,v_0] \to X$ is compact, i.e.,  if $(v_k)_{k=1}^\infty$ is a sequence in $X=C([0,s_*])$, such that $u_0 \leq v_k \leq v_0$, then the mapped sequence $(T(v_k))_{k=1}^\infty$ has a convergent subsequence in $X$. Indeed, as $T(v_0) \leq T(v_k) \leq T(u_0)$ for $k=1,2,\ldots$, the sequence $(T(v_k))_{k=1}^\infty$ is bounded in $X$. Furthermore, as
		\begin{equation*} [T(v_k)]'(s) = \mathrm{e}^{-\frac{\beta s}{\sqrt{\omega}}}\int_0^s  \mathrm{e}^{\frac{\beta t}{\sqrt{\omega}}} (1- \sqrt{2v_k(t)}) \dd t,
		\end{equation*}
		it follows that $T(v_k) \in C^1([0,s_*])$ and $(T(v_k))_{k=1}^\infty$ is a bounded sequence in $C^1([0,s_*])$. Thanks to the compact embedding of $C^1([0,s_*])$ in $C([0,s_*])=X$ (or by the Arzel\`a--Ascoli theorem) it then follows that there exists a subsequence $(T(v_{k_n}))_{n=1}^\infty$
		that converges in $X$ to an element of $X$.  Hence, the map $T$ is compact.\footnote{In fact, in the statement of Theorem 11.3 of \cite{precup2002methods} the author uses the term \textit{completely continuous map} in reference to $T$, but his notion of completely continuous map is what is now called \textit{compact map}, namely one that maps a bounded sequence into a sequence that has a norm-convergent subsequence in the target space. This is a stronger requirement than being a completely continuous map, i.e., one that maps a weakly convergent sequence into a norm-convergent sequence in the target space.}
		
		\item Next, a simple calculation yields that with our choices of $u_0(s)=s^2/6$ and $v_0(s)=s^2/2$, we have that $u_0 \leq T(v_0)$ and $T(u_0) \leq v_0$ on $[0,s_*]$. 
		Let us confirm that this is indeed the case. Let $\gamma:={\beta}/{\sqrt{\omega}}$. Because $1-\mathrm{e}^{-x} \geq x - \frac{1}{2}x^2$ for all $x \geq 0$ and $0 \leq s \leq s_* = \operatorname{min}\{1/2,\sqrt{\omega}/{\beta}\} = \operatorname{min}\{1/2,1/\gamma\}\leq 1/2$, we have that 
		\begin{align*}
			[T(v_0)](s) &= \frac{1}{\gamma}\int_0^s (1-\mathrm{e}^{-\gamma(s-t)})(1-t)\dd t \\
			& \geq \frac{1}{2\gamma}\int_0^s \left(\gamma (s-t) - \frac{\gamma^2(s-t)^2}{2}\right) \dd t = \frac{s^2}{4}\left(1-\frac{\gamma s}{3}\right) \geq \frac{s^2}{6} = u_0(s)\quad \forall\, s \in [0,s_*].
		\end{align*}
		On the other hand, since $1-\mathrm{e}^{-x} \leq x$ for all $x \geq 0$, it follows that
		\begin{align*}
			[T(u_0)](s) &= \frac{1}{\gamma}\int_0^s (1-\mathrm{e}^{-\gamma(s-t)})\left(1-\frac{t}{\sqrt{3}}\right)\dd t \\
			& \leq \frac{1}{\gamma}\int_0^s \gamma(s-t)\left(1-\frac{t}{\sqrt{3}}\right)\dd t = \frac{s^2}{2}\left(1-\frac{s}{3\sqrt{3}}\right) \leq \frac{s^2}{2}=v_0(s) \quad \forall\, s \in [0,s_*].
		\end{align*}

		Hence, thanks to the monotonicity of $T$, it follows that $T([u_0,v_0]) \subset [u_0,v_0]$. 
		Indeed, if $u_0 \leq w \leq v_0$, then $u_0 \leq T(v_0) \leq T(w) \leq T(u_0) \leq v_0$.
		
		\item Finally, we observe that, for any $\lambda \in (0,1)$, $v \in [u_0,v_0]$ and $t \in [0,s_*]$, we have that
		\[ 1 - \sqrt{2 \lambda v(t)} \leq \frac{1}{\sqrt{\lambda}}\left(1- \sqrt{2v(t)}\right),\]
		because
		\[ \sqrt{2v(t)}\,(1-\lambda) \leq t\, (1-\lambda) \leq \frac{1}{2} (1-\lambda) = \frac{1}{2}(1+\sqrt{\lambda})(1-\sqrt{\lambda}) \leq 1 - \sqrt{\lambda}.\]
		Consequently, 
		\begin{align*}[T(\lambda v)](s)=\frac{\sqrt{\omega}}{\beta} \int_0^s \bigg[1-\mathrm{e}^{\frac{-\beta (s-t)}{\sqrt{\omega}}}\bigg] \bigg(1 - \sqrt{2\lambda v(t) }\bigg)\dd t \leq \frac{1}{\sqrt{\lambda}} [T(v)](s) \quad \forall\, s \in [0,s_*].
		\end{align*}
		Clearly the function $\chi\,:\,(0,1) \to (0,\infty)$ defined by $\chi(\lambda):=1/\sqrt{\lambda}$ satisfies $\chi(\lambda)<1/\lambda$ for all $\lambda \in (0,1)$ and $T(\lambda v) \leq \chi(\lambda) T(v)$ for all $\lambda \in (0,1)$ and all $v \in [u_0,v_0]$.
	\end{itemize}
	
	Thus, we have checked all the hypotheses of Theorem 11.3 of \cite{precup2002methods}, which then implies that $T$ has a unique fixed point $u \in K$. In other words, there exists a \textit{unique} (local) solution $u$ to the initial-value problem on the interval $[0,s_*]$ when $\alpha=0$.
	
	\smallskip
	
	If $S \in (0,s_*]$, then the proof of the uniqueness (and existence) of a solution $u$ in the case of $\alpha=0$ is complete. If, on the other hand, $S>s_*$, then we proceed as follows. Let us define $u_*:=u(s_*)$ and $u'_*:=u'(s_*)$. Then, it remains to show uniqueness over the interval $[s_*,S]$ of solutions whose existence over $[0,S]$ we have already established above. Suppose therefore that $u$ and $v$ are two solutions on $[0,S]$ to the initial-value problem \eqref{eq::transformisana_Ujednacina}, 
	\eqref{eq::transformisani_pocetniUslovi} for $\alpha=0$. As $u(s)$ must be equal to $v(s)$ for all $s \in [0,s_*]$ thanks to the uniqueness of the solution on $[0,s_*]$, it follows that $u(s_*)=v(s_*)=u_*$ and $u'(s_*)=v'(s_*)=u'_*$. By integrating the differential equation \eqref{eq::transformisana_Ujednacina}, now from $s_*$ to $s \in (s_*,S]$, we then have
	\begin{equation}\label{eq::AA2}
		u(s) = u_* + \frac{\sqrt{\omega}}{\beta}\bigg(1 - \mathrm{e}^{\frac{\beta (s-s_*)}{\sqrt{\omega}}}\bigg)u_*' + 
		\int_{s_*}^s \frac{\sqrt{\omega}}{\beta}\bigg[1-\mathrm{e}^{\frac{-\beta (s-t)}{\sqrt{\omega}}}\bigg] \bigg(1 - \sqrt{2u(t) }\bigg)\dd t
	\end{equation}
	and
	\begin{equation}\label{eq::AA3}
		v(s) = u_* + \frac{\sqrt{\omega}}{\beta}\bigg(1 - \mathrm{e}^{\frac{\beta (s-s_*)}{\sqrt{\omega}}}\bigg)u_*' + 
		\int_{s_*}^s \frac{\sqrt{\omega}}{\beta}\bigg[1-\mathrm{e}^{\frac{-\beta (s-t)}{\sqrt{\omega}}}\bigg] \bigg(1 - \sqrt{2v(t) }\bigg)\dd t.
	\end{equation}
	As both $u$ and $v$ must be strictly positive on the entire closed interval $[s_*,S]$,  it follows from \eqref{eq::AA2} and \eqref{eq::AA3} by Gr\"onwall's lemma, in exactly the same way as above, that $|u(s)-v(s)|=0$ for all $s \in [s_*,S]$. Thus we have also shown the existence of a unique solution on the entire interval $[0,S]$ in the case of $\alpha=0$.
	
	\smallskip
	
	It remains to prove the continuous dependence of the solution on $\alpha$. Let us denote by $u_\alpha$ the unique solution of the initial-value problem corresponding to $\alpha \in [0,3/2]$.
	As $0\leq u_\alpha(s)\leq 9/8$ for all $s \in [0,S]$ and all $\alpha \in [0,3/2]$, it follows from \eqref{eq::AA+} that $|u_\alpha'(s)| \leq S$ for all $s \in [0,S]$ and all $\alpha \in [0,3/2]$. Let $\alpha_0 \in [0,3/2]$. Then, thanks to the Arzel\`a--Ascoli theorem, there exists a subsequence of the family $(u_\alpha)_{0\leq\alpha\leq 3/2}$, which converges to a function $\widehat{u}_{\alpha_0} \in C([0,S])$ uniformly on $[0,S]$ as $\alpha \to \alpha_{0} \in [0,3/2]$. Since $\widehat{u}_{\alpha_0}$ is the limit of a uniformly convergent sequence of nonnegative continuous functions, it too must be nonnegative on $[0,S]$. We then return to \eqref{eq::AA}, with $u$ replaced there by $u_\alpha$ and pass to the limit over this uniformly convergent subsequence as $\alpha \to \alpha_0$ to find that
	\begin{equation*}
		\widehat{u}_{\alpha_0}(s) = \frac{\alpha_0^2}{2}+
		\int_0^s \frac{\sqrt{\omega}}{\beta}\bigg[1-\mathrm{e}^{\frac{-\beta (s-t)}{\sqrt{\omega}}}\bigg] \bigg(1 - \sqrt{2\,\widehat{u}_{\alpha_0}(t) }\bigg)\dd t.
	\end{equation*}
	Clearly, $\widehat{u}_{\alpha_0}(0)=\alpha_0^2/2$, $\widehat{u}_{\alpha_0}'(0)=0$, and 
	\[\widehat{u}_{\alpha_0}''(s)= - \frac{\beta}{\sqrt{\omega}}\widehat{u}_{\alpha_0}'(s) + (1 - \sqrt{2\,\widehat{u}_{\alpha_0}(s)}). \]
	Therefore, the limiting function $\widehat{u}_{\alpha_0}$ is in fact the unique solution $u_{\alpha_0}$ of the initial value problem corresponding to $\alpha=\alpha_0 \in [0,3/2]$. As the initial-value problem corresponding to $\alpha_0$ has a unique solution, the same convergence argument applies to any subsequence of $(u_\alpha)_{0 \leq \alpha \leq 3/2}$ with $\alpha \to \alpha_0$. In other words, ${u}_{\alpha}$ converges to 
	$u_{\alpha_0}$ in the norm of $C([0,S])$ whenever $\alpha \to \alpha_0$.
	Thus, we have shown continuous dependence of the solution $u_\alpha$ on $\alpha \in [0,3/2]$.
	
	That completes the proof of the existence of a unique solution $u \in C^2([0,S])$ to the initial-value problem \eqref{eq::transformisana_Ujednacina}, 
	\eqref{eq::transformisani_pocetniUslovi} for all $\alpha \in [0,3/2]$ on the interval $[0,S]$ for any $S>0$, and of the well-posedness of the initial-value problem in the sense of Hadamard. 
\end{proof}

Although we have proved the well-posedness of the initial-value problem \eqref{eq::transformisana_Ujednacina}, 
\eqref{eq::transformisani_pocetniUslovi} on the interval $[0,S]$ for any $S>0$ and for all $\alpha \in [0,3/2]$, in view of our assumption \eqref{eq::InitCond_Dimensional}
and the definition of $\alpha$ as the ratio of the nonnegative initial height $h_0$ and the positive equilibrium height $h_e$ (cf. equation \eqref{eq::dimensionless}, the values of $\alpha$ that are of actual physical interest in the context of capillary rise are those in the range $0\leq \alpha \ll 1$. For this reason, in particular, we do not consider here the case where $\alpha > 3/2$.



\section{Stability and convergence to the equilibrium solution}
\label{sec::Stability}


We proceed by restating the second-order ordinary differential equation \eqref{eq::transformisana_Ujednacina} in an equivalent form as the following autonomous system of two first-order ordinary differential equations:
\begin{equation}
	\begin{split}
		u' &= v =: f_{1}(u,v), \\
		v' &= 1 - \frac{\beta}{\sqrt{\omega}}v - \sqrt{2u} =: f_{2}(u,v), 
	\end{split}
	\label{eq::sistem_U}
\end{equation}
with $f_1$ and $f_2$ understood to be functions of $(u,v)$ defined on $\Omega := \left\{ (u,v) \in \mathbb{R}^2 : u \geq 0,\, v \in \mathbb{R} \right\}$. The critical point of the system \eqref{eq::sistem_U} is the stationary solution $(u(s),v(s)) = (u_{\mathrm{cr}}, v_{\mathrm{cr}})$ for which both $f_{1}(u_{\mathrm{cr}}, v_{\mathrm{cr}}) = 0$ and $f_{2}(u_{\mathrm{cr}}, v_{\mathrm{cr}}) = 0$. Clearly, the system \eqref{eq::sistem_U} has a unique critical point
\begin{equation}
	(u_{\mathrm{cr}}, v_{\mathrm{cr}}) 
	= \left( \frac{1}{2}, 0 \right), 
	\label{eq::critical_U}
\end{equation}
which corresponds to the stationary solution \eqref{eq::Jurin_height} (or, equivalently, \eqref{eq::Jurin_DLess} or \eqref{eq::Jurin_u}). 

In \cite{plociniczak2018monotonicity} it was shown that the stationary solution of \eqref{eq::H_jednacina}, $H(T) = H_{0} = 1$, in the case without slip ($\beta = 1$) is reached monotonically or oscillatorily depending on the value of $\omega$. This stationary solution corresponds to the critical point \eqref{eq::critical_U} of the system \eqref{eq::sistem_U}. 

It turns out that when the slip parameter $\beta$ is introduced there are no qualitative changes in the asymptotic behavior of the solution---the only difference is in the critical value $\omega_\ast$ (cf.~equation \eqref{eq::crit} below) of the parameter $\omega$ that separates monotonic asymptotic behavior from oscillatory asymptotic behavior. To prove this claim, we shall linearize the system \eqref{eq::sistem_U} in the neighborhood of the critical point \eqref{eq::critical_U} by defining $x(s) := u(s) - u_{\mathrm{cr}}$, $y(s) := v(s) - v_{\mathrm{cr}}$, and note that the `perturbations' $x(s)$ and $y(s)$ satisfy the following autonomous linear system of differential equations: 
\begin{equation}
	\begin{split}
		x' &= y, \\
		y' &= - x - \frac{\beta}{\sqrt{\omega}} y.
	\end{split}
	\label{eq::sistem_Ulinear}
\end{equation}
We shall then analyze the stability and the type of the critical point \eqref{eq::critical_U}, or equivalently $(x_{\mathrm{cr}}, y_{\mathrm{cr}}) = (0,0)$.
We note in passing that $f_1$ and $f_2$ are $C^\infty$ functions in a small neighborhood of the critical point $(1/2,0)$;  therefore, the local linearization of the nonlinear system \eqref{eq::sistem_U} in the vicinity of the critical point is meaningful. 

In the following, the arguments largely proceed along similar lines as those in Section 3.2 of \cite{plociniczak2018monotonicity}.

\begin{theorem}\label{thm:lin}
	For the linear(ized) system \eqref{eq::sistem_Ulinear} the following assertions hold: 
	\begin{itemize}
		\item[(i)] the critical point $(x_{\mathrm{cr}}, y_{\mathrm{cr}}) = (0,0)$ is linearly asymptotically stable; 
		\item[(ii)] the type of the critical point depends on the values of the parameters, i.e., there is a critical value of $\omega$ 
		\begin{equation}\label{eq::crit}
			\omega_{\ast} = \frac{\beta^2}{4}, 
		\end{equation}
		such that 
		\begin{itemize}
			\item[(a)] for $\omega < \omega_{\ast}$ the critical point is a stable node;
			\item[(b)] for $\omega > \omega_{\ast}$ the critical point is a stable spiral;
			\item[(c)] for $\omega = \omega_{\ast}$ the critical point is a stable inflected node.
		\end{itemize}
	\end{itemize}
	\label{prop::spektralnaAnaliza_slip}
\end{theorem}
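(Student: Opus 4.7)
The plan is to reduce everything to a spectral computation for the coefficient matrix of the linear system \eqref{eq::sistem_Ulinear}, namely
\[ A = \begin{pmatrix} 0 & 1 \\ -1 & -\beta/\sqrt{\omega} \end{pmatrix}. \]
Its characteristic polynomial is $\lambda^{2} + (\beta/\sqrt{\omega})\lambda + 1 = 0$, and so
\[ \lambda_{\pm} = \tfrac{1}{2}\left(-\tfrac{\beta}{\sqrt{\omega}} \pm \sqrt{\tfrac{\beta^{2}}{\omega}-4}\right). \]
All three parts of the theorem follow by reading off the nature of these roots as a function of the sign of the discriminant $D := \beta^{2}/\omega - 4$, which vanishes precisely when $\omega = \omega_{\ast} = \beta^{2}/4$.

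For assertion (i), I would observe that in every one of the three regimes the real part of each eigenvalue is $-\beta/(2\sqrt{\omega})$: this is immediate when $D\leq 0$, and when $D>0$ it suffices to note $\sqrt{D}<\beta/\sqrt{\omega}$ so both real roots are strictly negative. Since $\beta,\omega>0$, the real parts are strictly negative, which is the standard sufficient condition for linear asymptotic stability of the origin.

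For assertion (ii), I would do a straightforward case split. When $\omega<\omega_{\ast}$, $D>0$ yields two distinct real negative eigenvalues, so $(0,0)$ is a stable node. When $\omega>\omega_{\ast}$, $D<0$ yields a complex conjugate pair with negative real part, so $(0,0)$ is a stable spiral. The only part needing a touch of care is the boundary case $\omega=\omega_{\ast}$, where there is a repeated real eigenvalue $\lambda_{+}=\lambda_{-}=-1$; here I would verify that $A - (-1)I$ is not the zero matrix (its first row is $(1,1)$), so its geometric multiplicity is only one and $A$ admits a nontrivial Jordan block rather than being a scalar multiple of $I$. This distinguishes the inflected (improper) node from a star node and completes the classification. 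I do not anticipate a substantive obstacle — the whole argument is routine constant-coefficient linear ODE theory; the only point one must be deliberate about is ruling out the diagonalizable (star node) case at $\omega=\omega_{\ast}$, since the label \emph{inflected node} in (c) demands non-diagonalizability.
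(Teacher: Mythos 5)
Your proposal is correct and follows essentially the same route as the paper's proof: compute the eigenvalues of $A$, read off stability from the negative real parts, classify by the sign of the discriminant, and, at $\omega=\omega_{\ast}$, confirm the repeated eigenvalue $-1$ has geometric multiplicity one (the paper does this by exhibiting the single eigenvector $(1,-1)^{\mathrm{T}}$, you by noting $A+I\neq 0$), so the critical point is an inflected rather than a star node. One cosmetic slip: your opening claim that ``in every one of the three regimes the real part of each eigenvalue is $-\beta/(2\sqrt{\omega})$'' is false when $D>0$ (the two real roots then differ), but your immediate follow-up argument via $\sqrt{D}<\beta/\sqrt{\omega}$ handles that case correctly, so the proof stands.
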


\begin{proof}
	The matrix 
	\[ A:=\left(\begin{array}{cc} 0 & 1 \\
		-1 & - \frac{\beta}{\sqrt{\omega}} \end{array}\right) \]
	associated with the right-hand side of the 
	linearized system \eqref{eq::sistem_Ulinear} has the following eigenvalues: 
	\begin{equation*}
		\lambda_{1,2} = - \frac{\beta}{2 \sqrt{\omega}} \left(  1 \pm \sqrt{1 - \frac{4 \omega}{\beta^{2}}} \right). 
	\end{equation*}
	Since $\beta>0$ and $\omega > 0$, the eigenvalues are nonzero and are either real and negative, or complex with negative real part, which proves (i). The type of the critical point depends on the sign of the discriminant $1 - 4\omega/\beta^{2}$: when $1-4\omega/\beta^2>0$, i.e., $\omega<\omega_*$, the critical point $(0,0)$ is a stable node, while when $1-4\omega/\beta^2<0$, i.e., $\omega>\omega_*$, the critical point $(0,0)$ is a stable spiral. When  $\omega=\omega_{\ast}$, $\lambda_{1,2}=-1$ with associated eigenvector $(1,-1)^{\mathrm{T}}$, and the critical point $(0,0)$ of the linearized system \eqref{eq::sistem_Ulinear} is then a stable inflected node.
\end{proof} 

\begin{corollary}\label{corr1}
	The critical point \eqref{eq::critical_U} of the nonlinear autonomous system \eqref{eq::sistem_U} is a stable node for $\omega < \omega_*$, a stable spiral for $\omega>\omega_*$, and a stable inflected node for $\omega=\omega_*$, where $\omega_{\ast}$ is as in \eqref{eq::crit}. 
\end{corollary}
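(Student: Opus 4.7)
The plan is to reduce the corollary to Theorem \ref{thm:lin} by invoking a standard linearization principle for planar autonomous systems at a hyperbolic equilibrium. The groundwork is already in place: as noted immediately before Theorem \ref{thm:lin}, the vector field $(f_1,f_2)$ in \eqref{eq::sistem_U} is $C^\infty$ on the open half-plane $\{u>0\}$ (because $u\mapsto\sqrt{2u}$ is smooth there), and the critical point $(1/2,0)$ lies in this open set. Moreover, Theorem \ref{thm:lin} shows that the eigenvalues $\lambda_{1,2}$ of the Jacobian $A$ at $(1/2,0)$ have strictly negative real part for every $\omega>0$ and $\beta>0$, so the equilibrium is hyperbolic.

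First, I would invoke the Hartman--Grobman theorem, which asserts that near a hyperbolic equilibrium of a $C^1$ vector field, the nonlinear flow is locally topologically conjugate to the flow of its linearization. This immediately yields the local asymptotic stability of $(1/2,0)$ for every $\omega>0$, matching assertion (i) of Theorem \ref{thm:lin}, and allows one to read off the topological type from the linearization.

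Second, for the distinction between a node and a spiral, I would use the fact that for hyperbolic planar equilibria the classification as node or spiral is preserved under the passage from the linear system to the nonlinear one. More concretely, in the case $\omega<\omega_\ast$ the eigenvalues are real, distinct, and negative, and the stable manifold theorem provides two smooth invariant curves through $(1/2,0)$ tangent to the linear eigendirections, along which trajectories approach the equilibrium with the rates given by $\lambda_{1,2}$; the remaining trajectories enter the equilibrium tangent to the slow eigendirection, giving the node picture. In the case $\omega>\omega_\ast$, the eigenvalues are complex conjugate with negative real part, so trajectories of the linear system rotate with constant angular velocity; by Hartman--Grobman, trajectories of \eqref{eq::sistem_U} in a punctured neighborhood of $(1/2,0)$ make infinitely many revolutions before approaching the equilibrium, i.e., $(1/2,0)$ is a stable spiral.

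The main obstacle is the borderline case $\omega=\omega_\ast$, where $A$ has a repeated real eigenvalue $-1$ with a nontrivial Jordan block (the linear critical point is a stable inflected/improper node). The equilibrium remains hyperbolic, so Hartman--Grobman still delivers topological conjugacy and stability; the only delicate point is that topological conjugacy does not, by itself, distinguish an inflected node from an ordinary node. To settle this, I would appeal to a $C^1$-linearization result (e.g., Sternberg's theorem, whose non-resonance condition is trivially satisfied for a repeated eigenvalue $\lambda_1=\lambda_2=-1$ in a two-dimensional system, since the resonance relations $\lambda_i=m_1\lambda_1+m_2\lambda_2$ with $m_1+m_2\geq 2$ force $m_1+m_2=1$) to obtain a $C^1$ change of coordinates conjugating \eqref{eq::sistem_U} to \eqref{eq::sistem_Ulinear} near the equilibrium. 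The qualitative phase portrait of the inflected node---trajectories entering $(1/2,0)$ tangent to the single eigendirection $(1,-1)^\mathrm{T}$---then transfers to the nonlinear system, completing the proof of the corollary.
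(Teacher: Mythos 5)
Your proposal follows the same core route as the paper: the paper's proof of Corollary \ref{corr1} is a two-line citation of Theorem \ref{thm:lin} together with the Hartman--Grobman theorem (Perko, Sec.~2.8), justified by the fact that $\mathrm{Re}(\lambda_{1,2})=-\beta/(2\sqrt{\omega})<0$, so the equilibrium $(1/2,0)$ is hyperbolic. Where you genuinely go beyond the paper is in observing that Hartman--Grobman alone is, strictly speaking, too coarse for the \emph{type} classification: topological conjugacy cannot distinguish a stable node from a stable spiral or an inflected node, since all planar hyperbolic sinks are topologically conjugate to $\dot x=-x$, $\dot y=-y$ (the conjugating homeomorphism can wind or unwind spirals near the fixed point). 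Your remedies are the right ones: the stable manifold theorem and eigendirection tangencies for the distinct-real-eigenvalue case, and a $C^1$-linearization theorem (Sternberg; the non-resonance computation $\lambda_i=m_1\lambda_1+m_2\lambda_2$ forcing $m_1+m_2=1$ is correct, and the vector field is indeed $C^\infty$ near $(1/2,0)$ because $u\mapsto\sqrt{2u}$ is smooth for $u>0$) for the borderline case $\omega=\omega_\ast$. One small inconsistency: in the spiral case $\omega>\omega_\ast$ you attribute the infinite winding to Hartman--Grobman, which suffers from exactly the defect you yourself identify at $\omega=\omega_\ast$; the fix is immediate within your own toolkit, since for a complex-conjugate pair with nonzero real part the resonance relations again force $m_1+m_2=1$, so Sternberg's $C^1$-linearization (or the classical node/focus preservation theorems for smooth planar fields, Perko Sec.~2.10) applies verbatim. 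In short, your argument is correct modulo this easily repaired slip, and it is in fact more rigorous than the paper's own proof, which glosses over the fact that topological conjugacy does not preserve the node/spiral/inflected-node distinction.
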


\begin{proof}
	The assertion is a direct consequence of the statements (a), (b) and (c) of Theorem \ref{thm:lin} and the Hartman--Grobman theorem (cf., for example, Sec. 2.8 of \cite{perko2001}), thanks to the fact that each of the eigenvalues, $\lambda_{1,2}$, calculated in the proof of Theorem \ref{thm:lin} has a strictly negative real part: $\mathrm{Re}(\lambda_{1,2})=-\beta/(2\sqrt{\omega})<0$ for all $\beta>0$ and $\omega>0$.
\end{proof}

Corollary \ref{corr1} implies that the critical point/equilibrium height is approached monotonically in the subcritical case ($\omega < \omega_{\ast}$) and oscillatorily in the supercritical case ($\omega > \omega_{\ast}$).

While Theorem \ref{prop::spektralnaAnaliza_slip}  provides a helpful characterization of the critical point, it holds only locally, in the sense that the monotonic or oscillatory behavior asserted in the theorem can be claimed only after the trajectory of the solution has reached a sufficiently small neighborhood of the critical point, and there is no mathematically rigorous guarantee that the solution of the initial-value problem will reach that neighborhood. An attempt was made in \cite{plociniczak2018monotonicity} to prove global stability by constructing a suitable Lyapunov function for equation \eqref{eq::H_jednacina} and the corresponding basin of attraction. However, the basin of attraction determined that way did not include the initial data. 

In this work, we suggest a new Lyapunov function related to \eqref{eq::transformisana_Ujednacina}, which enables us to extend the basin of attraction and include the initial data. 

\begin{lemma}
	Let $\Omega = \left\{ (u,v) \in \mathbb{R}^2 : u \geq 0,\, v \in \mathbb{R} \right\}$; then, the function $V$ defined by
	\begin{equation}\label{eq:lyapunov}
		V(u,v) := \frac{1}{2}v^2 - u + \frac{2\sqrt{2}}{3}u^{3/2} + \frac{1}{6}, \quad \mbox{for $(u,v) \in \Omega$},
	\end{equation}
	\label{lema::LyapunovFunction}
	is a Lyapunov function for the nonlinear autonomous system \eqref{eq::sistem_U}. 
\end{lemma}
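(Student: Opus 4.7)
The plan is to verify the three standard requirements for $V$ to qualify as a Lyapunov function for the autonomous system \eqref{eq::sistem_U} on $\Omega$ with respect to the critical point $(u_{\mathrm{cr}}, v_{\mathrm{cr}}) = (1/2, 0)$: (a) $V \in C^1(\Omega)$; (b) $V$ is positive definite about $(1/2, 0)$, in the sense that $V(1/2, 0) = 0$ and $V(u, v) > 0$ for all $(u, v) \in \Omega \setminus \{(1/2, 0)\}$; and (c) the orbital derivative $\dot V := (\partial_u V)\, f_1 + (\partial_v V)\, f_2$ is nonpositive throughout $\Omega$. The choice \eqref{eq:lyapunov} is not accidental: essentially the same combination already appeared as the energy-type quantity $E(s)$ in the proof of Theorem~\ref{th::global}, whose monotonicity was established there; the orbital-derivative computation below is just the autonomous-system reformulation of that observation.

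For (a), I would note that the only ingredient which is not $C^\infty$ on $\Omega$ is $u^{3/2}$, whose partial derivative $\tfrac{3}{2}\sqrt{u}$ extends continuously to $u = 0$; hence $V \in C^1(\Omega)$ (and is $C^\infty$ on the open half-plane $u > 0$). For (b), I would decompose $V(u, v) = \tfrac{1}{2} v^2 + g(u)$, where $g(u) := -u + \tfrac{2\sqrt{2}}{3}\, u^{3/2} + \tfrac{1}{6}$. A short calculation gives $g'(u) = -1 + \sqrt{2u}$ and $g''(u) = 1/\sqrt{2u} > 0$ on $(0, \infty)$, so $g$ is strictly convex on $[0, \infty)$ with a unique minimizer at $u = 1/2$; substitution yields $g(1/2) = -\tfrac{1}{2} + \tfrac{1}{3} + \tfrac{1}{6} = 0$. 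Hence $g(u) \geq 0$ on $[0, \infty)$ with equality only at $u = 1/2$, and combined with $\tfrac{1}{2} v^2 \geq 0$ (equality only at $v = 0$) this delivers the required positive definiteness.

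For (c), a direct computation using \eqref{eq::sistem_U} gives
\begin{equation*}
\dot V(u, v) = (-1 + \sqrt{2u})\, v + v\!\left(1 - \tfrac{\beta}{\sqrt{\omega}}\, v - \sqrt{2u}\right) = -\tfrac{\beta}{\sqrt{\omega}}\, v^2 \leq 0,
\end{equation*}
which exhibits the key cancellation that motivated the specific form of $V$: the gravitational and capillary contributions coming from $f_2$ cancel exactly against the capillary/gravitational piece of $\partial_u V$, leaving only the dissipative viscous contribution, which has a definite sign because $\beta, \omega > 0$.

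I do not anticipate any serious obstacle in the proof; the only mild subtlety worth flagging is that $V$ is merely $C^1$ (and not $C^2$) up to the boundary $u = 0$ of $\Omega$, but this regularity is enough both for the Lyapunov property itself and for the subsequent application of LaSalle's invariance principle, which only requires continuous differentiability of $V$ and a locally Lipschitz right-hand side away from $u = 0$.
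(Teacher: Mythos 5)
Your proposal is correct, and its overall structure coincides with the paper's proof: both verify $V \in C^1(\Omega)$, positive definiteness about $(1/2,0)$, and the orbital-derivative identity $\dot V = -\frac{\beta}{\sqrt{\omega}}v^2 \leq 0$, with the cancellation computation in your step (c) being literally the same as the paper's Step 3. The only substantive difference is in the positivity step: where you minimize $g(u) = -u + \frac{2\sqrt{2}}{3}u^{3/2} + \frac{1}{6}$ via the first-order condition $g'(u) = -1+\sqrt{2u}$ and strict convexity, the paper instead exhibits the explicit algebraic factorization $g(u) = \frac{2\sqrt{2}}{3}\left(\sqrt{u} - \frac{1}{\sqrt{2}}\right)^{2}\left(\sqrt{u} + \frac{1}{2\sqrt{2}}\right)$, which makes the double root at $u = 1/2$ visible at a glance. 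The two are equivalent in effort; your calculus route has the incidental advantage that it actually substantiates the strict convexity of $V$ on $\Omega$ (as the sum of the strictly convex functions $\frac{1}{2}v^2$ and $g(u)$), a property the paper invokes without proof at the start of Theorem \ref{thm:Lyapunov} to identify the sublevel set $\Omega_0$, whereas the paper's factorization gives the sharper pointwise structure directly. Your closing remark on the limited regularity of $V$ at $u=0$ is a fair observation and does not affect the argument, since only $C^1$ regularity of $V$ is used here and in the subsequent application of LaSalle's principle.
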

\begin{proof}
	We begin by noting that $V \in C^1(\Omega)$. 
	To confirm that $V$ is a Lyapunov function for the system \eqref{eq::sistem_U} it will be helpful to define the function
	\begin{equation}
		E(u, v) := \frac{1}{2}v^2 - u + \frac{2\sqrt{2}}{3}u^{3/2}.
		\label{eq::EnergyFunction}
	\end{equation}
	We note that at the critical point \eqref{eq::critical_U} we have
	\begin{equation*}
		E(u_{\mathrm{cr}}, v_{\mathrm{cr}}) = - \frac{1}{6}.
	\end{equation*}
	Hence, 
	\begin{equation*}
		V(u,v) =  
		\frac{1}{2}v^2 - u + \frac{2\sqrt{2}}{3}u^{3/2} + \frac{1}{6} = E(u,v) - E(u_{\mathrm{cr}}, v_{\mathrm{cr}}).
	\end{equation*}
	We are now ready to verify that the function $V$ is indeed a Lyapunov function for the system. We shall do so in three steps. 
	\begin{itemize}
		\item[(Step 1)] Clearly, $V (u_{\mathrm{cr}}, v_{\mathrm{cr}}) = V \big( \frac{1}{2},0 \big) = 0$. 
		
		\item[(Step 2)] Next, we need to show that $V(u,v) > 0$ for every $(u,v) \in \Omega$, $(u,v) \neq (u_{\mathrm{cr}}, v_{\mathrm{cr}})$. To this end, $V(u,v)$ is transformed as follows:
		\begin{equation*}
			\begin{split}
				V(u,v) & = \frac{1}{2}v^2 - u + \frac{2\sqrt{2}}{3}u\sqrt{u} + \frac{1}{6} \\
				& = \frac{1}{2}v^2 + \underbrace{ \frac{2\sqrt{2}}{3} \left( \sqrt{u} - \frac{1}{\sqrt{2}}\right)^{2} \left( \sqrt{u} + \frac{1}{2\sqrt{2}} \right).}_{> 0 \text{ for $u \in [0, \infty) \symbol{92} \{ \frac{1}{2} \} $}}
			\end{split}
		\end{equation*}    
		Therefore, the function $V(u,v)$ is strictly positive on $\Omega\setminus\{(u_{\textrm{cr}}, v_{\textrm{cr}})\}$.
		
		\item[(Step 3)]  We also need to show that $V'(u(s),v(s)):=\frac{\mathrm{d}}{\mathrm{d}s} V(u(s),v(s)) \leq 0 $, for every $(u(s),v(s)) \in \Omega$. Indeed,
		\begin{equation}
			\begin{split}
				\frac{\mathrm{d}}{\mathrm{d}s} V(u(s),v(s)) &= v(s)v'(s) - u'(s) + \frac{2\sqrt{2}}{3} \cdot \frac{3}{2} [u(s)]^{1/2} u'(s) \\
				&= v(s)(-\sqrt{2u(s)} - \frac{\beta}{\sqrt{\omega}}v(s) + 1) - v(s) + \sqrt{2}[u(s)]^{1/2} v(s) \\
				&= -\frac{\beta}{\sqrt{\omega}}[v(s)]^2,
			\end{split}
			\label{eq::V_prime}
		\end{equation}
		which is nonpositive for all  $\beta > 0$ and all $\omega>0$.   
	\end{itemize}
\end{proof} 

As $V'(u(s),v(s)) \leq 0$ for all $s \geq 0$, but $V'(u_{\mathrm{cr}}, v_{\mathrm{cr}})=V'(1/2,0)=0$ instead of $<0$, Lyapunov's stability method cannot be applied, and we have to resort to LaSalle's invariance principle to find the basin of attraction and prove the asymptotic stability of the critical point. 

\begin{theorem} \label{thm:Lyapunov}
	The unique critical point $(u_{\mathrm{cr}}, v_{\mathrm{cr}}) = \left(1/2,0 \right)$ of \eqref{eq::sistem_U} is asymptotically stable for all trajectories starting inside the set
	\begin{equation*}
		\Omega_0 = \Bigg\{ (u,v) \in \Omega : \frac{1}{2}v^2 - u + \frac{2\sqrt{2}}{3} u^{3/2} + \frac{1}{6} \leq C \Bigg\},
	\end{equation*}
	where $C$ is a constant given by
	\begin{equation*}
		C = -\frac{1}{2}\alpha^2 + \frac{2\sqrt{2}}{3}\frac{\alpha^2}{2} \sqrt{\frac{\alpha^2}{2}} + \frac{1}{6}, \quad \alpha \in \left[0,\frac{3}{2}\right]. 
	\end{equation*}
	
\end{theorem}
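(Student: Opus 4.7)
The plan is to apply LaSalle's invariance principle, using the Lyapunov function $V$ constructed in Lemma \ref{lema::LyapunovFunction}, with $\Omega_0$ serving as a compact, positively invariant set that contains the initial datum.

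First, I would observe that the initial state $(u(0),v(0)) = (\alpha^2/2,0)$ lies in $\Omega_0$, since $V(\alpha^2/2,0) = C$ by the very definition of $C$. Next, $\Omega_0$ is \emph{compact}: it is closed, by the continuity of $V$ and the closedness of $\Omega$, and it is bounded, because the terms $\frac{2\sqrt{2}}{3}u^{3/2}$ and $\frac{1}{2}v^2$ dominate $V$ as $u \to \infty$ or $|v| \to \infty$ in $\Omega$, so every sublevel set of $V|_\Omega$ is bounded. Positive invariance of $\Omega_0$ then follows from \eqref{eq::V_prime}: since $\frac{\mathrm{d}}{\mathrm{d}s}V(u(s),v(s)) = -\frac{\beta}{\sqrt{\omega}}[v(s)]^2 \leq 0$ along solutions of \eqref{eq::sistem_U}, one has $V(u(s),v(s)) \leq V(u(0),v(0)) = C$ for all $s \geq 0$, and Theorem \ref{th::global} guarantees that the trajectory is defined on all of $[0,\infty)$ and remains in $\Omega = \{u \geq 0\}$.

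The heart of the proof is to identify the largest invariant subset $M$ of
\[
E := \{(u,v) \in \Omega_0 : V'(u,v) = 0\},
\]
where $V'$ denotes the Lie derivative of $V$ along the flow of \eqref{eq::sistem_U}. By \eqref{eq::V_prime}, one has $E = \Omega_0 \cap \{v=0\}$. For a trajectory to remain in $\{v=0\}$ for all future times, one must have $v'(s) \equiv 0$ on that stretch; the second equation of \eqref{eq::sistem_U} then forces $1 - \sqrt{2u(s)} = 0$, i.e.\ $u(s) \equiv 1/2$. Hence $M = \{(u_{\mathrm{cr}}, v_{\mathrm{cr}})\}$, and LaSalle's invariance principle yields that $(u(s),v(s)) \to (u_{\mathrm{cr}},v_{\mathrm{cr}})$ as $s \to \infty$ for every trajectory starting in $\Omega_0$.

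The main obstacle I anticipate is not a deep analytical difficulty but rather the need to reconcile the invariance-principle machinery with the fact that the right-hand side of \eqref{eq::sistem_U} fails to be Lipschitz at $u=0$. However, Theorem \ref{th::global} supplies global existence, uniqueness and nonnegativity of the trajectory on $[0,\infty)$, together with continuous dependence on initial data, which is precisely the regularity LaSalle's theorem requires on the compact, positively invariant set $\Omega_0$. A secondary point worth confirming is that $C \geq 0$ for every $\alpha \in [0,3/2]$, so that $\Omega_0$ is nonempty; this follows from the observation that the function $u \mapsto V(u,0)$ attains its unique minimum value $0$ at $u = u_{\mathrm{cr}} = 1/2$, whence $C = V(\alpha^2/2,0) \geq 0$ automatically, and $\Omega_0$ necessarily contains $(u_{\mathrm{cr}}, v_{\mathrm{cr}})$.
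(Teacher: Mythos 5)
Your proposal is correct and follows essentially the same route as the paper: the Lyapunov function of Lemma \ref{lema::LyapunovFunction}, LaSalle's invariance principle on the sublevel set $\Omega_0$, identification of $\mathcal{E} = \Omega_0 \cap \{v=0\}$ and of the largest invariant subset $\mathcal{M} = \{(1/2,0)\}$ via $v' = 1-\sqrt{2u} \neq 0$ for $u \neq 1/2$. If anything, your version is slightly tighter than the paper's: you verify the compactness of $\Omega_0$ explicitly (the paper leaves it implicit), and your direct monotonicity argument $V(u(s),v(s)) \leq V(u(0),v(0)) = C$ subsumes the paper's separate mean-value-theorem contradiction for trajectories starting at the boundary points $(u_{\mathrm{min}},0)$ and $(u_{\mathrm{max}},0)$, whose explicit formulas \eqref{eq::U_bounds} are not needed for the invariance argument.
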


\begin{proof}
	Thanks to the definition \eqref{eq:lyapunov} of the Lyapunov function $V$, we have that
	\begin{equation*}
		\Omega_0 := \Bigg\{ (u,v) \in \Omega : V(u,v) \leq C \Bigg\},
	\end{equation*}
	where
	\begin{equation*}
		C := -\frac{1}{2}\alpha^2 + \frac{2\sqrt{2}}{3}\frac{\alpha^2}{2} \sqrt{\frac{\alpha^2}{2}} + \frac{1}{6}, \quad \alpha \in \left[0,\frac{3}{2}\right].
	\end{equation*}
	Because of the strict convexity of the function $V: (u,v) \in \Omega \mapsto V(u,v)$, $\Omega_0$ is the smallest level set of $V$ that is both a subset of $\Omega$ and which contains the initial state \eqref{eq::transformisani_pocetniUslovi} as well as the critical point. Next, we define
	\begin{equation*}
		\mathcal{E} := \Bigg\{ (u,v) \in \Omega_0 : V'(u,v) = 0 \Bigg\}.
	\end{equation*}
	By noting \eqref{eq::V_prime}, we deduce that all points in $\mathcal{E}$ are of the form $(u,0)$, $u \in [u_{\mathrm{min}},u_{\mathrm{max}}]$, where $u_{\mathrm{min}}$ and $u_{\mathrm{max}}$ are nonnegative solutions of the equation 
	\begin{equation*}
		- u + \frac{2\sqrt{2}}{3} u^{3/2} + \frac{1}{6} = C. 
	\end{equation*}
	By recalling the definition of $C$ a simple calculation based on factorization yields that, if $\alpha \in [0,1]$, then  
	\begin{equation}
		u_{\mathrm{min}} = \frac{\alpha^{2}}{2}, \quad 
		u_{\mathrm{max}} = \frac{9}{8} \left( \frac{1}{2} - \frac{\alpha}{3} + \frac{1}{6} \sqrt{9 + 12 \alpha - 12 \alpha^{2}} \right)^{2}. 
		\label{eq::U_bounds}
	\end{equation}
	We note that for $\alpha \in (0,1)$, $0<u_{\mathrm{min}}<u_{\mathrm{max}}<9/8$, i.e., $u_{\mathrm{min}}$ and $u_{\mathrm{max}}$ in \eqref{eq::U_bounds} satisfy the two-sided bound \eqref{eq::EnergyEstimate}. 
	For $\alpha=0$ we have that $u_{\mathrm{min}}=0$ and $u_{\mathrm{max}}=9/8$. For $\alpha=1$, $u_{\mathrm{min}} = u_{\mathrm{max}}=1/2 \in (0,9/8)$, in agreement with the fact that $u_{\mathrm{cr}}=1/2$ is the unique stationary solution. If on the other hand $\alpha \in [1,3/2]$, then
	\[u_{\mathrm{min}}= \frac{9}{8} \left( \frac{1}{2} - \frac{\alpha}{3} + \frac{1}{6} \sqrt{9 + 12 \alpha - 12 \alpha^{2}} \right)^{2}, \quad u_{\mathrm{max}}=\frac{\alpha^2}{2},\]
	and, again, $0<u_{\mathrm{min}}<u_{\mathrm{max}}<9/8$ for $\alpha \in (1,3/2)$, while for $\alpha = 3/2$ we have that $u_{\mathrm{min}}=0$ and $u_{\mathrm{max}}=9/8$.
	
	Let us suppose that $\alpha \in [0,3/2]$. Then, $0\leq u_{\mathrm{min}}\leq u_{\mathrm{max}} \leq 9/8$.
	To prove the asymptotic stability of the critical point \eqref{eq::critical_U} we have to determine the largest forward invariant subset $\mathcal{M}$ of $\mathcal{E}$, i.e., the largest set of solutions of the form $(u(s),0)$ with initial data in $\mathcal{E}$, i.e., with $(u(0),v(0)) = (u_{0},0)$, where $u_{0} \in [u_{\mathrm{min}},u_{\mathrm{max}}]$. It is obvious that $\mathcal{M}$ contains at least one point---the critical point itself. Our goal is to show that it is the only point contained in $\mathcal{M}$. To do so, we return to \eqref{eq::sistem_U} and require that $v' = 0$, since $v = 0$ for all $t \geq 0$. However, it is obvious from \eqref{eq::sistem_U} that $v' = 1 - \sqrt{2 u} \neq 0$ when $v = 0$, for any $u \neq u_{\mathrm{cr}} = 1/2$. It follows that there is no other point in $\mathcal{M}$ except the critical point $(u_{\mathrm{cr}}, v_{\mathrm{cr}})$, i.e., 
	\begin{equation*}
		\mathcal{M} = \left\{ \left( \frac{1}{2},0 \right) \right\}.
	\end{equation*}
	
	To complete the proof we still need to analyze solutions with initial points in $\partial\Omega_{0} \cap \mathcal{E} = \{ (u_{\mathrm{min}},0), (u_{\mathrm{max}},0) \}$. Our goal is to ensure that these two points are in the basin of attraction, $\Omega_0$. To that end, consider the initial-value problem for the system \eqref{eq::sistem_U} with $(u(0),v(0)) = (u_{\mathrm{min}}, 0)$. It follows from the definition of $\Omega_{0}$ that $V(u(0),v(0)) = C$. We know from Theorem \ref{th::global} that there exists a unique solution $(u(s),v(s))$ for $s \in [0,h]$ and any $h>0$. Let us assume (for contradiction) that  $V(u(h),v(h)) = \bar{C} > C$ for some $h>0$, i.e., that  $(u(h),v(h)) \notin \Omega_{0}$. Since $s \mapsto V(u(s),v(s))$ is a continuous function defined on $[0,h]$, which is differentiable on $(0,h)$, by the mean value theorem there exists a $\sigma \in (0,h)$ such that 
	\begin{equation*}
		V'(u(\sigma),v(\sigma)) = 
		\frac{V(u(h),v(h)) - V(u(0),v(0))}{h - 0}
		= \frac{\bar{C} - C}{h} > 0. 
	\end{equation*}
	This contradicts Lemma \ref{lema::LyapunovFunction}, which states that $V'(u,v) \leq 0$, $(u,v) \in \Omega$. Therefore,  $V(u(h),v(h)) \leq C$ for all $h \in [0,\infty)$, i.e.,  $(u(h),v(h)) \in \Omega_{0}$ for all $h \in  [0, \infty)$. The same argument applies to $(u(0),v(0)) = (u_{\mathrm{max}}, 0)$. 
	By applying LaSalle's invariance principle, it follows that every solution starting in $\Omega_0$ approaches $\mathcal{M}$ as $t \to \infty$.
\end{proof} 

Theorem \ref{thm:Lyapunov} facilitates the formulation of a new result regarding the solution of Washburn's equation. First, let us recall that we have proved the global existence and uniqueness of the solution of the initial-value problem \eqref{eq::transformisana_Ujednacina}, \eqref{eq::transformisani_pocetniUslovi}. Second, we have proved asymptotic stability of the critical point for the trajectories starting in the set $\Omega_{0}$ whose boundary contains our initial data. An immediate consequence of this is the following result. 

\begin{corollary}
	Let $\alpha \in [0,3/2]$. The unique solution $t \in [0,\infty) \to (u(t),v(t))$ of the system \eqref{eq::sistem_U} with initial data $(u(0),v(0)) = \left(\alpha^{2}/2, 0 \right)$ converges to the unique critical point $(u_\mathrm{cr}, v_\mathrm{cr}) = \left( 1/2, 0 \right)$ as $t \to \infty$.
\end{corollary}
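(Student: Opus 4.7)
The plan is to assemble the corollary from the two main results already established in this paper: the global-in-time existence and uniqueness statement for the initial-value problem (Theorem \ref{th::global}), and the global asymptotic stability assertion on the invariant set $\Omega_{0}$ (Theorem \ref{thm:Lyapunov}). Since the corollary is little more than a matching-up of these ingredients, the argument should be short; the only nontrivial point to verify is that the prescribed initial datum $(\alpha^{2}/2,\,0)$ indeed belongs to $\Omega_{0}$.

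First, I would invoke Theorem \ref{th::global} to obtain, for every $S>0$, a unique solution $u \in C^{2}([0,S])$ with $u(0)=\alpha^{2}/2$ and $u'(0)=0$; by uniqueness these local solutions are consistent on overlapping intervals, hence they glue together into a unique solution $u \in C^{2}([0,\infty))$. Setting $v:=u'$ then yields the unique global-in-time solution $s \mapsto (u(s),v(s))$ of the first-order system \eqref{eq::sistem_U} with $(u(0),v(0))=(\alpha^{2}/2,\,0)$.

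Second, I would verify that the initial state belongs to $\Omega_{0}$. A direct calculation using the definition \eqref{eq:lyapunov} of $V$ gives
\begin{equation*}
V\!\left(\tfrac{\alpha^{2}}{2},\,0\right) \,=\, -\tfrac{1}{2}\alpha^{2} + \tfrac{2\sqrt{2}}{3}\cdot\tfrac{\alpha^{2}}{2}\sqrt{\tfrac{\alpha^{2}}{2}} + \tfrac{1}{6} \,=\, C,
\end{equation*}
so that $(\alpha^{2}/2,\,0)\in\partial\Omega_{0}\subset\Omega_{0}$. The forward invariance of $\Omega_{0}$ established within the proof of Theorem \ref{thm:Lyapunov} (via the sign condition $V'\leq 0$ and the mean value argument) then guarantees that the trajectory remains in $\Omega_{0}$ for all $s\geq 0$.

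Finally, Theorem \ref{thm:Lyapunov}, whose conclusion was obtained through LaSalle's invariance principle with the largest forward-invariant subset $\mathcal{M}\subset\mathcal{E}$ reduced to the singleton $\{(1/2,\,0)\}$, yields that $(u(s),v(s))\to(1/2,\,0)$ as $s\to\infty$, which is exactly the claim. I do not anticipate a substantive obstacle here: the only points deserving care are the boundary membership of the initial datum, which is resolved by the explicit equality $V(\alpha^{2}/2,\,0)=C$, and the passage from local to global solutions, which is immediate from the uniqueness part of Theorem \ref{th::global}.
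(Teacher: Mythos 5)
Your proposal is correct and follows essentially the same route as the paper, which presents this corollary as an immediate consequence of Theorem \ref{th::global} (global existence and uniqueness, with local solutions on $[0,S]$ consistent by uniqueness) combined with Theorem \ref{thm:Lyapunov}, noting exactly as you do that the initial datum $(\alpha^{2}/2,0)$ satisfies $V(\alpha^{2}/2,0)=C$ and hence lies on the boundary of the basin of attraction $\Omega_{0}$, whose forward invariance was secured in the proof of Theorem \ref{thm:Lyapunov}. Your explicit verification of the boundary membership and the gluing of local solutions makes precise what the paper leaves as a brief remark, but the substance is identical.
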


In more physical terms, every solution of Washburn's equation \eqref{switala_scaled_with_imemersion1}, which starts from a small nonnegative initial height $H(0) = \alpha$ from the state of rest ($H'(0) = 0$), converges to the equilibrium height $H_0 = 1$, i.e., $H(T) \to 1$, as $T \to \infty$, regardless of the choice of the slip parameter $\beta>0$ and the dimensionless parameter $\omega>0$. This result is physically expected and was computationally observed, but lacked a rigorous mathematical proof, which is now available. 



\section{Conclusions}


We derived Washburn's capillary rise equation with a slip condition from first principles. A new scaling approach was introduced and different flow regimes were analyzed. We then proved the global existence and uniqueness of a positive solution. Furthermore, we showed that the slip parameter does not affect the qualitative behavior of the solution. Finally, a Lyapunov function was constructed, demonstrating that the system reaches a stationary solution from the given initial data. 
Future work could focus on analyzing the slip parameter, particularly investigating the effects of negative values of the slip parameter $\beta$ or its variation along the $z$-axis. A potential physical interpretation of negative slip could be related to changes in the contact angle. The construction and mathematical analysis of stable and convergent numerical methods for the approximate solution of Washburn's equation that correctly reproduce the long-time dynamics is a further potential area of future research.

\section*{Acknowledgement}

This work was supported by the Serbian Ministry of Science, Technological Development and Innovation through the Mathematical Institute of the Serbian Academy of Sciences and Arts.


\bibliographystyle{siam}
\bibliography{references}


\appendix

\section{Local forms of balance laws}

The analysis of the restrictions on the velocity and pressure fields requires the use of the local forms of mass and momentum balance laws:
\begin{equation*}
	\nabla \cdot \mathbf{v} = 0, 
	\label{eq::MassBL1}
\end{equation*}
\begin{equation*}
	\rho \left( \frac{\partial \mathbf{v}}{\partial t} + \mathbf{v} \cdot \nabla \mathbf{v} \right) = - \nabla p + \mu \nabla^{2} \mathbf{v} + \rho \mathbf{g}. 
	\label{eq::MomentumBL1}
\end{equation*}
Specifically, when the motion is studied in cylindrical coordinates, the local form of the mass balance law for an incompressible fluid reads as follows \cite{acheson1990elementary}: 
\begin{equation}
	\frac{1}{r} \frac{\partial}{\partial r} (rv_r) + \frac{1}{r} \frac{\partial v_{\varphi}}{\partial \varphi} + \frac{\partial v_z}{\partial z} = 0.
	\label{eq::MassBalanceCylindrical}
\end{equation}
The local form of the momentum balance law in cylindrical coordinates is
\begin{equation}
	\begin{split}
		\frac{\partial v_r}{\partial t} + (\mathbf{v} \cdot \nabla)v_r - \frac{v_{\varphi}^{2}}{r} &= -\frac{1}{\rho}\frac{\partial p}{\partial r} + \nu \left( \nabla^{2}v_r - \frac{v_r}{r^{2}} - \frac{2}{r^{2}} \frac{\partial v_{\varphi}}{\partial \varphi} \right), \\
		\frac{\partial v_{\varphi}}{\partial t} + (\mathbf{v} \cdot \nabla) v_{\varphi} + \frac{v_r v_{\varphi}}{r} &= -\frac{1}{\rho r} \frac{\partial p}{\partial \varphi} + \nu \left( \nabla ^{2} v_{\varphi} - \frac{v_{\varphi}}{r^2} + \frac{2}{r^{2}} \frac{\partial v_r}{\partial \varphi} \right), \\
		\frac{\partial v_z}{\partial t} + (\mathbf{v} \cdot \nabla)v_z &= - \frac{1}{\rho} \frac{\partial p}{\partial z} + \nu \nabla^{2} v_z - g.
	\end{split}
	\label{eq::MomentumBalanceCylindrical}
\end{equation}

\section{Global forms of balance laws and Washburn's equation}

Our aim is to show that Washburn's equation is a consequence of the global form of momentum balance and the assumptions on the flow field. This procedure will be split in several steps. 

\subsection{Domain}

In order to apply the balance equations in their global form, we will describe the pipe in cylindrical coordinates. The volume occupied by the fluid at time $t \geq 0$ will be taken to be the set $\mathcal{P}_{t}$ in cylindrical coordinates
\begin{equation*}
	\mathcal{P}_{t} = \{ (r, \varphi, z) \vert r \in [0, R], \varphi \in [0, 2\pi], z \in [0, h(t)] \},
\end{equation*}
where $h(t)$ is the height reached by the fluid column at time $t \geq 0$.
The boundary of $\mathcal{P}_{t}$ will be split in three parts
\begin{equation*}
	\partial \mathcal{P}_{t} = \partial \mathcal{P}_{t1} \cup \partial \mathcal{P}_{t2} \cup \partial \mathcal{P}_{t3},
\end{equation*}
so that $\partial \mathcal{P}_{t1}$ and $\partial \mathcal{P}_{t3}$ are the boundaries of the bottom and top bases of the cylinder, respectively, $\partial \mathcal{P}_{t2}$ is the boundary of the vertical side of the cylinder:
\begin{alignat*}{2}
	\partial \mathcal{P}_{t1} & = \{ (r, \varphi, z) \vert r \in [0,R], \varphi \in [0, 2\pi], z = 0\}, & \quad \mathbf{n}_{1} & = - \mathbf{e}_{z};
	\\
	\partial \mathcal{P}_{t2}  & = \{(r, \varphi, z) \vert r=R, \varphi \in [0, 2\pi], z \in [0, h(t)] \}, & \quad \mathbf{n}_{2} & = \mathbf{e}_{r};
	\\
	\partial \mathcal{P}_{t3} & = \{ (r, \varphi, z) \vert r \in [0,R], \varphi \in [0, 2\pi], z = h(t)\}, & \quad \mathbf{n}_{3} & = \mathbf{e}_{z};
\end{alignat*}
$\mathbf{n}_{1}$, $\mathbf{n}_{2}$ and $\mathbf{n}_{3}$ are the respective unit outward normal vectors. 

\subsection{Mean velocity}

As a consequence of our assumption of an axially symmetric flow field, the local mass balance law and Poiseuille velocity profile, the velocity field has the form $\mathbf{v} = v_{z}(r,t) \mathbf{e}_{z}$, where $v_{z}(r,t)$ is given, by \eqref{eq::Poiseuille}, as
\begin{equation*}
	v_{z}(r,t) = v(t) \left( 1 - \frac{r^2}{R^2} + \frac{2L}{R}  \right).
\end{equation*}
We need the mean velocity of the fluid, i.e.,
the average value of the velocity field $\mathbf{v}$ with respect to a cylindrical cross section perpendicular to the axis of the pipe:
\begin{equation*}
	\bar{v}(t) := \frac{1}{R^{2} \pi} \int_{0}^{2\pi} \int_{0}^{R} v_{z}(r,t) r \mathrm{d}r \mathrm{d}\varphi.
\end{equation*}
Using \eqref{eq::Poiseuille} and applying Fubini's Theorem we get
\begin{equation*}
	\bar{v}(t) = \frac{1}{R^{2}\pi} \int_{0}^{2\pi}\int_{0}^{R} v(t) \left( 1 - \frac{r^{2}}{R^{2}} + 2\frac{L}{R} \right)r\mathrm{d}r\mathrm{d}\varphi 
	= \frac{v(t)}{2} \left( 1 + 4 \frac{L}{R} \right). 
\end{equation*}
In subsequent computations the following form of the mean velocity is used:
\begin{equation}
	\bar{v}(t) = \frac{v(t)}{2 \beta}, \quad 
	\beta = \left( 1 + 4 \frac{L}{R} \right)^{-1}.
	\label{MeanVelocity}
\end{equation}

\subsection{Momentum balance law in global form}

We want to apply the momentum balance in global form 
\begin{equation}
	\frac{\mathrm{d}}{\mathrm{d}t} \int_{\mathcal{P}_{t}} \mathbf{K}(\mathbf{x},t) \mathrm{d}V = \int_{\partial \mathcal{P}_t}\mathbf{t(n)}\mathrm{d}S + \int_{\mathcal{P}_{t}} \rho \mathbf{b} \mathrm{d}V,
	\label{eq::MomentumBalanceGlobal}
\end{equation}
to the volume $\mathcal{P}_{t}$. We will calculate separately the left- and the right-hand side of the equation.

\subsubsection{Left-hand side}

The left-hand side expresses the rate of change of momentum of the fluid. The momentum density is given by
\begin{equation*}
	\mathbf{K}(\mathbf{x},t) = \rho(\mathbf{x},t) \mathbf{v}(\mathbf{x},t) = \rho v_{z} (r,t) \mathbf{e}_z.
\end{equation*}
Thus, we have
\begin{equation*}
	\frac{\mathrm{d}}{\mathrm{d}t} \int_{\mathcal{P}_{t}} \mathbf{K}(\mathbf{x},t) \mathrm{d}V = \frac{\mathrm{d}}{\mathrm{d}t} \int_{\mathcal{P}_{t}} \rho v_z(r,z)\mathbf{e}_z \mathrm{d}V = \rho \left[ \frac{\mathrm{d}}{\mathrm{d}t} \int_{0}^{h(t)} \int_{0}^{R} \int_{0}^{2\pi} v_z(r,t) \mathrm{d}\varphi r\mathrm{d}r\mathrm{d}z \right] \mathbf{e}_z.
\end{equation*}
Taking into account \eqref{eq::Poiseuille} and 
\begin{equation*}
	\int_{0}^{R} \int_{0}^{2\pi} v_z(r,t)\mathrm{d}\varphi r\mathrm{d}r = R^{2}\pi \bar{v}(t),
\end{equation*}
it follows that
\begin{equation}
	\frac{\mathrm{d}}{\mathrm{d}t} \int_{\mathcal{P}_{t}} \mathbf{K}(\mathbf{x},t) \mathrm{d}V = \rho R^{2}\pi \left[ \frac{\mathrm{d}}{\mathrm{d}t} \int_{0}^{h(t)} \bar{v}(t) \mathrm{d}z \right] \mathbf{e}_z 
	= \rho R^{2}\pi \frac{\mathrm{d}}{\mathrm{d}t} \left[ \bar{v}(t) h(t) \right] \mathbf{e}_z.
	\label{MomentumGlobal}
\end{equation}

\subsubsection{Right-hand side}

The right-hand side of \eqref{eq::MomentumBalanceGlobal} involves contact and body forces acting on $\mathcal{P}_{t}$. For the body forces we have 
\begin{equation*}
	\mathbf{F}_v = \int_{\mathcal{P}_{t}} \rho \mathbf{b} \mathrm{d}V = \int_{\mathcal{P}_{t}} \rho (-g\mathbf{e}_{z}) \mathrm{d}V = - \rho g \mathbf{e}_{z} \int_{0}^{h(t)} \int_{0}^{R} \int_{0}^{2\pi} r \mathrm{d}r \mathrm{d}\varphi \mathrm{d}z.
\end{equation*}
Fubini's theorem gives that
\begin{equation}
	\mathbf{F}_v = \int_{\mathcal{P}_{t}} \rho \mathbf{b} \mathrm{d}V = - \rho g R^{2}\pi h(t) \mathbf{e}_{z}.
	\label{eq::bodyForce}
\end{equation}

It is left to determine the total contact force $\mathbf{F}_s$ by integrating the traction over the boundary of the pipe. Since the boundary is divided into three regions, it follows that
\begin{equation*}
	\mathbf{F}_s = \int_{\partial \mathcal{P}_{t}} \mathbf{t}(\mathbf{n}) \mathrm{d}S = \int_{\partial \mathcal{P}_{t1}}\mathbf{t}_{1}(\mathbf{n}_{1}) \mathrm{d}S_1 + \int_{\partial \mathcal{P}_{t2}} \mathbf{t}_{2}(\mathbf{n}_{2}) \mathrm{d}S_2 + \int_{\partial \mathcal{P}_{t3}} \mathbf{t}_{3}(\mathbf{n}_{3}) \mathrm{d}S_3,
\end{equation*}
where $S_1, S_2$ and $S_3$ are surface elements. To this end it will be helpful to recall the following expression for the traction of an incompressible viscous fluid (cf. \cite{acheson1990elementary,Kuzmanovic2022}):
\begin{equation}
	\mathbf{t}(\mathbf{n}) = - p \mathbf{n} + \mu \left[ 2 (\mathbf{n} \cdot \nabla) \mathbf{v} + \mathbf{n} \times (\nabla \times \mathbf{v}) \right]. 
	\label{eq::StressAcheson}
\end{equation} 
Since in our case
\begin{equation*}
	\nabla = \mathbf{e}_{r} \frac{\partial}{\partial r} 
	+ \mathbf{e}_{\varphi} \frac{1}{r} \frac{\partial}{\partial \varphi} + \mathbf{e}_{z} \frac{\partial}{\partial z}, 
\end{equation*}
we can then determine the local tractions. For $\mathbf{t}_{1}$, $\mathbf{n}_{1} = - \mathbf{e}_{z}$ and it follows that
\begin{gather*}
	(\mathbf{n}_{1} \cdot \nabla) \mathbf{v} = (-\mathbf{e}_z \cdot \nabla) (v_z\mathbf{e}_z) = -\frac{\partial v_z}{\partial z}\mathbf{e}_z = \mathbf{0}, 
	\\ 
	\mathbf{n}_{1} \times (\nabla \times \mathbf{v}) = (-\mathbf{e}_z) \times \left( -\frac{\partial v_z}{\partial r} \mathbf{e}_{\varphi} \right) = - \frac{\partial v_z}{\partial r} \mathbf{e}_{r}.
\end{gather*}
Therefore,
\begin{equation*}
	\mathbf{t}_{1}(\mathbf{n}_{1}) = p(0) \mathbf{e}_z - \mu \frac{\partial v_z}{\partial r} \mathbf{e}_r.
\end{equation*}
Similarly, for $\mathbf{t}_{2}$, $\mathbf{n}_{2} = \mathbf{e}_{r}$, and we have that 
\begin{gather*}
	( \mathbf{n}_{2} \cdot \nabla)\mathbf{v} = (\mathbf{e}_r \cdot \nabla)(v_z\mathbf{e}_z) = \frac{\partial v_z}{\partial r}\mathbf{e}_z,
	\\
	\mathbf{n}_{2} \times (\nabla \times \mathbf{v}) = \mathbf{e}_r \times \left( -\frac{\partial v_z}{\partial r}\mathbf{e}_{\varphi} \right) = -\frac{\partial v_z}{\partial r} \mathbf{e}_z,
\end{gather*}
which implies that 
\begin{equation*}
	\mathbf{t}_{2}(\mathbf{n}_{2}) = - p (z) \mathbf{e}_r +  \mu \frac{\partial v_z}{\partial r} \mathbf{e}_z. 
\end{equation*}
Finally, since $\mathbf{t}_{3} = - \mathbf{t}_{1}$ we have that
\begin{equation*}
	\mathbf{t}_{3}(\mathbf{n}_{3}) = - p(h(t)) \mathbf{e}_z + \mu \frac{\partial v_z}{\partial r} \mathbf{e}_r. 
\end{equation*}

Using these expressions, we can now compute the net force exerted on the bases of the cylinder:
\begin{equation*}
	\begin{split}
		\int_{\partial \mathcal{P}_{t1}} \mathbf{t}_{1}(\mathbf{n}_{1}) \mathrm{d}S_1 & = \int_{0}^{R} \int_{0}^{2\pi} \mathbf{t}_{1}(\mathbf{n}_{1}) r \mathrm{d}r \mathrm{d} \varphi = R^{2} \pi p(0) \mathbf{e}_{z} - 2 \pi \mu \left[ \int_{0}^{R} r \frac{\partial v_z}{\partial r} \mathrm{d}r \right] \mathbf{e}_{z}, \\
		\int_{\partial \mathcal{P}_{t3}} \mathbf{t}_{3}(\mathbf{n}_{3}) \mathrm{d}S_3 & = \int_{0}^{R} \int_{0}^{2\pi} \mathbf{t}_{3}(\mathbf{n}_{3}) r \mathrm{d}r \mathrm{d} \varphi = - R^{2} \pi p(h(t)) \mathbf{e}_{z} + 2 \pi \mu \left[ \int_{0}^{R} r \frac{\partial v_z}{\partial r} \right]\mathrm{d}r \mathbf{e}_{z},
	\end{split}
\end{equation*}
and thus
\begin{equation*}
	\int_{\partial \mathcal{P}_{t1}} \mathbf{t}_{1}(\mathbf{n}_{1}) \mathrm{d}S_1 + \int_{\partial \mathcal{P}_{t3}} \mathbf{t}_{3}(\mathbf{n}_{3}) \mathrm{d}S_3 = R^{2}\pi \left[ p(0) - p(h(t)) \right] \mathbf{e}_z.
\end{equation*}
We now consider the net force exerted on the vertical boundary of the cylinder:
\begin{equation*}
	\int_{\partial \mathcal{P}_{t2}} \mathbf{t}_{2}(\mathbf{n}_{2}) \mathrm{d}S_2 = - \left[ \int_{0}^{2\pi} \int_{0}^{h(t)} 
	p(z) \mathbf{e}_{r} R \mathrm{d}\varphi \mathrm{d}z \right] + \mu \left[ \int_{0}^{2\pi} \int_{0}^{h(t)} \frac{\partial v_z}{\partial r} R \mathrm{d}\varphi \mathrm{d}z \right] \mathbf{e}_{z}.
\end{equation*}
Note that the unit vector $\mathbf{e}_{r}$ must be kept under the integral sign because it is not constant. To compute the first integral, we use that $\mathbf{e}_{r} = \operatorname{cos}\varphi \,\mathbf{e}_{x} + \operatorname{sin} \varphi \,\mathbf{e}_{y}$ and apply Fubini's theorem to find that
\begin{align*}
	\int_{0}^{2\pi} \int_{0}^{h(t)} & p(z) \mathbf{e}_{r} R \mathrm{d}\varphi \mathrm{d}z 
	= \int_{0}^{2\pi} \int_{0}^{h(t)} p(z) \left( \operatorname{cos} \varphi \mathbf{e}_{x} + \operatorname{sin} \varphi \mathbf{e}_{y} \right) R \mathrm{d}\varphi \mathrm{d}z 
	\\ 
	& = \left[ \int_{0}^{2\pi} \int_{0}^{h(t)} p(z) \operatorname{cos} \varphi R \mathrm{d}\varphi \mathrm{d}z \right] \mathbf{e}_{x} + \left[ \int_{0}^{2\pi} \int_{0}^{h(t)} p(z) \operatorname{sin} \varphi R \mathrm{d}\varphi \mathrm{d}z \right] \mathbf{e}_{y} = \mathbf{0}.  
\end{align*}
This result is a consequence of the axially symmetric distribution of the pressure along the side of the cylinder. In order to evaluate the second term, we will use the Poisseuille flow profile \eqref{eq::Poiseuille} and the mean velocity \eqref{MeanVelocity}; thus, 
\begin{equation*}
	2\mu \pi R \left[ \int_{0}^{h(t)}  
	\frac{\partial v_z}{\partial r} \mathrm{d}z \right] \mathbf{e}_z = - 8\mu \pi \beta \bar{v}(t) h(t) \mathbf{e}_{z}.
\end{equation*}
The last equality follows when by taking $r=R$. Finally,
\begin{equation*}
	\int_{\partial \mathcal{P}_{t2}} \mathbf{t}_{2}(\mathbf{n}_{2}) \mathrm{d}S_2 = - 8\mu \pi \beta \bar{v}(t) h(t) \mathbf{e}_z.
\end{equation*}
Thus, the total contact force is 
\begin{equation}
	\mathbf{F}_s = \int_{\mathcal{P}_{t}} \mathbf{t}(\mathbf{n}) \mathrm{d}S = -R^{2}\pi  \left[p(h(t)) - p(0) \right] \mathbf{e}_z - 8\mu \pi \beta \bar{v}(t)h(t) \mathbf{e}_z. 
	\label{eq::surfaceForce}
\end{equation}

\subsection{Washburn's equation}

First, we substitute \eqref{MomentumGlobal}, \eqref{eq::bodyForce} and \eqref{eq::surfaceForce} into the global form of the momentum balance equation \eqref{eq::MomentumBalanceGlobal}; hence, 
\begin{equation*}
	\rho R^{2}\pi \frac{\mathrm{d}}{\mathrm{d}t} \left[ \bar{v}(t) h(t) \right] \mathbf{e}_z = - \rho g R^{2}\pi h(t) \mathbf{e}_{z} -R^{2}\pi \left[p(h(t)) - p(0) \right] \mathbf{e}_z - 8\mu \pi \beta \bar{v}(t)h(t) \mathbf{e}_z.
\end{equation*}
Since all vectors appearing in this equality are parallel to $\mathbf{e}_{z}$, we shall analyze only this component. Dividing both sides by $R^{2}\pi$ and substituting $\bar{v}(t) = \frac{\mathrm{d}h(t)}{\mathrm{d}t} \equiv \dot{h}(t)$ we obtain
\begin{equation}
	\left[p(0) - p(h(t)) \right] = \rho \frac{\mathrm{d}}{\mathrm{d}t} \left[\dot{h}(t)h(t)\right] + \rho gh(t) + \frac{8\mu \beta \dot{h}(t)h(t)}{R^{2}}. 
	\label{Washburn0}
\end{equation}

To complete the derivation of Washburn's equation we need to introduce the surface tension, which is the main driving agent for capillary flow of the fluid. The coefficient $\gamma$ of surface tension is defined as
\begin{equation*}
	\gamma := \frac{F}{d},
\end{equation*}
where $F$ is the intensity of the force due to surface tension and $d$ is the length along which the force is exerted. In our case, the force due to surface tension acts along the inner circumference of the pipe, whereby
\begin{equation*}
	F = 2R \pi \gamma.
\end{equation*}
Since we are considering the fluid flow in the direction of the vertical $z$-coordinate, we need only the vertical component of the surface tension force
\begin{equation*}
	F_{vert} = F \operatorname{cos}\theta = 2R \pi \gamma \operatorname{cos} \theta,
\end{equation*}
where $\theta$ is the contact angle of the fluid with the vertical wall of the pipe. On the other hand, the pressure has only a $z$-component and the global pressure difference balances the vertical component of the surface tension; therefore, 
\begin{equation*}
	p(0) - p(h(t)) = \frac{F_{vert}}{R^{2} \pi} = \frac{2 \gamma \operatorname{cos}\theta}{R}.
\end{equation*}
Substituting the last expression into \eqref{Washburn0} leads to Washburn's equation
\begin{equation}
	\rho \frac{\mathrm{d}}{\mathrm{d}t} \left[\dot{h}(t)h(t) \right] + \rho gh(t) + \frac{8\mu \beta \dot{h}(t)h(t)}{R^{2}} 
	= \frac{2 \gamma \operatorname{cos}\theta}{R}.
	\label{Washburn}
\end{equation}

\end{document}